\newcommand{\red}{\textcolor{black}}
\newcommand{\commentred}[1]{}
\newcommand{\commentblue}[1]{}
\newcommand{\commentgreen}[1]{}
\newcommand{\commentmag}[1]{}
\newcommand{\CAT}{\mathrm{CAT}}
\newcommand{\CBB}{\mathrm{CBB}}
\newcommand{\R}{\mathbb{R}}
\newcommand{\N}{\mathbb{N}}
\newcommand{\bel}[1]{\begin{equation}\label{#1}}
\newcommand{\be}{\begin{equation}}
\newcommand{\ee}{\end{equation}}
\newcommand{\ba}{\begin{eqnarray}}
\newcommand{\ea}{\end{eqnarray}}
\newcommand{\rf}[1]{(\ref{#1})}
\newcommand{\qe}{\end{equation}}
\newtheorem{theorem}{Theorem}[section]
\newtheorem{lemma}[theorem]{Lemma}
\newtheorem{corollary}[theorem]{Corollary}
\newtheorem{definition}[theorem]{Definition}
\newtheorem{proposition}[theorem]{Proposition}
\newtheorem{remark}[theorem]{Remark}
\def\nat{\mathbb{N}}                
\def\rls{\mathbb{R}}                
\def\hs{\mathcal{H}}                
\def\model{\mathbb{M}_\kappa^2}      
\def\eps{\varepsilon}
\def\ol{\overline}
\def\l{\left}
\def\r{\right}
\def\col{\colon}
\def\res{\restriction}
\def\length{\operatorname{length}}
\def\spn{\operatorname{span}}
\def\sec{\operatorname{Sec}}
\def\curv{\operatorname{Curv}}
\def\di{\operatorname{d}\!}
\def\as{\!\mathrel{\mathop:}=} 
\begin{document}

\title{\red{A notion of nonpositive curvature for general metric spaces}}
\author[M. Ba\v{c}\'ak \and B. Hua \and J. Jost \and M. Kell \and A. Schikorra]{Miroslav Ba\v{c}\'ak \and Bobo Hua \and J\"{u}rgen Jost \and Martin Kell \and Armin Schikorra}
\date{\today}
\subjclass[2010]{Primary: 51F99; 53B20; Secondary: 52C99}
\keywords{Comparison geometry, geodesic space, Kirszbraun's theorem, nonpositive curvature.}
\thanks{The research leading to these results has received funding from the
 European Research Council under the European Union's Seventh Framework
 Programme (FP7/2007-2013) / ERC grant agreement no 267087.}


\address{Max Planck Institute for Mathematics in the Sciences, Inselstr.~22, 04103 Leipzig, Germany}
\curraddr[B. Hua]{School of Mathematical Sciences, LMNS, Fudan University, Shanghai 200433, China}
\email[M. Ba\v{c}\'ak]{miroslav.bacak@mis.mpg.de}
\email[B. Hua]{bobohua@fudan.edu.cn}
\email[J. Jost]{jost@mis.mpg.de}
\email[M. Kell]{mkell@mis.mpg.de}
\email[A. Schikorra]{armin.schikorra@mis.mpg.de}

\begin{abstract}
We introduce a new definition of nonpositive curvature in metric spaces and study its relationship to the existing notions of nonpositive curvature in comparison geometry. The main feature of our definition is that it applies to all metric spaces and does not rely on geodesics. Moreover, a scaled and a relaxed version of our definition are appropriate in discrete metric spaces, and are believed to be of interest in geometric data analysis.
\end{abstract}

\maketitle


\section{Introduction}

The aim of the present paper is to introduce a new definition of nonpositive curvature in metric spaces. Similarly to the definitions of Busemann and CAT(0) spaces, it is based on comparing triangles in the metric space in question with triangles in the Euclidean plane, but it does not require the space be geodesic.

Let $(X,d)$ be a metric space. A triple of points $\l(a_1,a_2,a_3\r)$ in $X$ is called a \emph{triangle} and the points $a_1,a_2,a_3$ are called its \emph{vertices.} For this triangle in $(X,d),$ there exist points $\ol{a}_1,\ol{a}_2,\ol{a}_3\in\rls^2$ such that
\begin{equation*}
 d\l(a_i,a_j\r)=\l\| \ol{a}_i-\ol{a}_j \r\|,\qquad \text{for every } i,j=1,2,3,
\end{equation*}
where $\|\cdot\|$ stands for the Euclidean distance. The triple of points $\l(\ol{a}_1,\ol{a}_2,\ol{a}_3\r)$ is called a \emph{comparison triangle} for the triangle $\l(a_1,a_2,a_3\r),$ and it is unique up to isometries.

Given these two triangles, we define the functions
\begin{align*}
 \rho_{\l(a_1,a_2,a_3\r)}(x) & =\max_{i=1,2,3} d(x,a_i),\qquad x\in X,\\
\intertext{and,}
 \rho_{\l(\ol{a}_1,\ol{a}_2,\ol{a}_3\r)}(x) & =\max_{i=1,2,3} \l\|x-\ol{a}_i\r\|,\qquad x\in \rls^2.
\end{align*}
The numbers
\begin{equation*}
 r\l(a_1,a_2,a_3\r)\as \inf_{x\in X} \rho_{\l(a_1,a_2,a_3\r)}(x) \quad\text{and}\quad r\l(\ol{a}_1,\ol{a}_2,\ol{a}_3\r)\as \min_{x\in \rls} \rho_{\l(\ol{a}_1,\ol{a}_2,\ol{a}_3\r)}(x)
\end{equation*}
are called the \emph{circumradii} of the respective triangles. Next we can introduce our main definition.
\begin{definition}[Nonpositive curvature] \label{def:ournpc}
 Let $(X,d)$ be a metric space. We say that $\curv X\leq0$ if, for each triangle $\l(a_1,a_2,a_3\r)$ in $X,$ we have
\begin{equation}  \label{eq:def}
 r\l(a_1,a_2,a_3\r)\leq r\l(\ol{a}_1,\ol{a}_2,\ol{a}_3\r),
\end{equation}
where $\ol{a}_i$ with $i=1,2,3$ are the vertices of an associated comparison triangle.
\end{definition}

As we shall see, our definition of nonpositive curvature is implied by
the CAT(0) property, but not by nonpositive curvature in the sense of
Busemann. In Riemannian manifolds, however, all of them are equivalent
to global nonpositive \emph{sectional} curvature. We also make a
connection to the celebrated Kirszbraun extension theorem.

In order to appreciate the geometric content of our definition, let us assume that the infimum in \rf{eq:def} is attained, i.e., there exists some $m\in X$ with 
\bel{eq:2}
d(m,a_i) \le r\l(a_1,a_2,a_3\r)= \inf_{x\in X} \rho_{\l(a_1,a_2,a_3\r)}(x).
\qe
We then call such an $m$ a \emph{circumcenter} of the triangle with vertices $a_1,a_2,a_3.$ This can be equivalently expressed as
\bel{eq:3}
\bigcap_{i=1,2,3} B\l(a_i,r\l(a_1,a_2,a_3\r)\r)\neq \emptyset,
\qe
where $B(x,r)\as\{ y\in X: d(x,y) \le r\}$ denotes a closed distance
ball. The intersection is nonempty because it contains the point
$m$. The condition \rf{eq:3} as such, however, does not involve the
point $m$ explicitly. Our curvature inequality thus embodies the
principle that three balls in $X$ should have a nonempty intersection
whenever the corresponding balls in the Euclidean plane with the same
distances between their centers intersect nontrivially. It therefore is meaningful in a general metric
space to search for the minimal radius for which the balls centered at
three given points have a nonempty intersection. In such a general
context, curvature bounds can therefore be interpreted as
quantification of the dependence of such a minimal radius on the
distances between the points involved, as compared to the Euclidean
situation. Below, we shall also discuss how this principle can be
adapted to discrete metric spaces. This should justify the word
``general'' in the title of our paper. 

It is also worth mentioning that our definition of nonpositive curvature is stable under the Gromov-Hausdorff convergence.


\section{Preliminaries}

We first introduce some terminology from metric geometry and recall a few facts. As references on the subject, we recommend \cite{bacak,BridsonHaefliger99,Jost97}. Let $(X,d)$ be a metric space and let $x,y\in X.$ If there exists a point $m\in X$ such that $d(x,m)=d(m,y)=\frac12d(x,y),$ we call it a \emph{midpoint} of $x,y.$ Similarly, we say that a pair of points $x,y\in X$ has \emph{approximate midpoints} if for every $\eps>0$ there exists $m\in X$ such that
\begin{equation*}
 \max\l\{d(x,m),d(y,m)\r\}\le\frac12d(x,y)+\eps.
\end{equation*}

A continuous mapping $\gamma\col[0,1]\to X$ is called a \emph{path} and its \emph{length} is defined as
\begin{equation*}
\length(\gamma)\as\sup \sum_{i=1}^n d\l(\gamma\l(t_{i-1}\r),\gamma\l(t_i\r) \r), 
\end{equation*}
where the supremum is taken over the set of all partitions $0=t_0<\cdots<t_n=1$ of the interval~$[0,1],$ with an arbitrary $n\in\nat.$ Given $x,y\in X,$ we say that a path $\gamma\col [0,1]\to X$ joins $x$ and $y$ if $\gamma(0)=x$ and $\gamma(1)=y.$
A~metric space $(X,d)$ is a \emph{length space} if
\begin{equation*}
 d(x,y)=\inf\l\{\length(\gamma)\col\text{ path } \gamma\text{ joins } x,y  \r\},
\end{equation*}
for every $x,y\in X.$ A complete metric space is a lenght space if and only if each pair of points has approximate midpoints.

A metric space $(X,d)$ is called \emph{geodesic} if each pair of points $x,y\in X$ is joined by a path $\gamma\col[0,1]\to X$ such that
\begin{equation*}
d\l(\gamma(s),\gamma(t)\r)=d(x,y)\:|s-t|,
\end{equation*}
for every $s,t\in[0,1].$ The path $\gamma$ is then called a \emph{geodesic} and occasionally denoted $[x,y].$ If each pair of points is connected by a \emph{unique} geodesic, we call the space \emph{uniquely geodesic.}
 
Denote by $Z_{t}(x,y)$ the set of $t$-midpoints, i.e. $z\in Z_{t}(x,y)$
iff $td(x,y)=d(x,z)$ and $(1-t)d(x,y)=d(z.y).$ A geodesic space
is called non-branching if for each triple of points $x,y,y'\in X$
with $d(x,y)=d(x,y)$ the condition $Z_{t}(x,y)\cap Z_{t}(x,y')\ne\varnothing$ 
for some $t\in(0,1)$ implies that $y=y'$. 
 
\subsection{Busemann spaces}

A geodesic space $(X,d)$ is a Busemann space if and only if, for every geodesics $\gamma,\eta\col[0,1]\to X,$ the function $t\mapsto d \l(\gamma(t),\eta(t)\r)$ is convex on $[0,1].$ This property in particular implies that Busemann spaces are uniquely geodesic.

\subsection{Hadamard spaces}

Let $(X,d)$ be a geodesic space. If for each point $z\in X,$ each geodesic $\gamma\col[0,1]\to X,$ and $t\in[0,1],$ we have
\begin{equation} \label{eq:cat}
d\l(z,\gamma(t)\r)^2\leq (1-t) d\l(z,\gamma(0)\r)^2+td\l(z,\gamma(1)\r)^2-t(1-t) d\l(\gamma(0),\gamma(1)\r)^2,
\end{equation}
the space $(X,d)$ is called CAT(0). It is easy to see that CAT(0) spaces are Busemann. A complete CAT(0) space is called an \emph{Hadamard space.}


\section{Connections to other definitions of NPC} \label{sec:otherNPC}

In this section we study the relationship between Definition~\ref{def:ournpc} and other notions of nonpositive curvature that are known in comparison geometry.

We begin with a simple observation. If a metric space $(X,d)$ is complete and $\curv X\leq0,$ then it is a length space. If we moreover required the function $\rho_{\l(a_1,a_2,a_3\r)}(\cdot)$ from Definition \ref{def:ournpc} to attain its minimum, we would obtain a geodesic space. That is a motivation for introducing scaled and relaxed versions of Definition~\ref{def:ournpc} in Section~\ref{sec:scaled}.

To show that Hadamard spaces have nonpositive curvature in the sense of Definition~\ref{def:ournpc}, we need the following version of the Kirszbraun extension theorem with Hadamard space target. By a nonexpansive mapping we mean a $1$-Lipschitz mapping.
\begin{theorem}[Lang-Schroeder] \label{thm:kirszbraun}
 Let $(\hs,d)$ be an Hadamard space and let $S\subset\rls^2$ be an arbitrary set. Then for each nonexpansive mapping $f\col S\to\hs,$ there exists a nonexpansive mapping $F\col\rls^2\to\hs$ such that $F\res_S=f.$
\end{theorem}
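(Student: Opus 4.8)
The plan is to construct $F$ by extending $f$ one point at a time and to show each single-point extension preserves nonexpansiveness. Order the nonexpansive extensions of $f$ by inclusion of their graphs; since the union of a chain of nonexpansive maps is again nonexpansive, Zorn's lemma produces a maximal extension $F\col D\to\hs$ with $S\subseteq D\subseteq\rls^2$. If $D\neq\rls^2$, I would pick $p\in\rls^2\setminus D$ and seek a value $y_0$ for $F(p)$; the constraint that the enlarged map remain nonexpansive is precisely that $d(y_0,F(x))\le\|p-x\|$ for every $x\in D$, that is,
\[
 y_0\in\bigcap_{x\in D}B\big(F(x),\|p-x\|\big).
\]
Finding such a $y_0$ contradicts maximality, so the entire theorem reduces to proving this intersection is nonempty.

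Next I would pass from this infinite intersection to a finite one. Each ball is closed, bounded, and convex (balls in a $\CAT(0)$ space are convex), and bounded closed convex subsets of an Hadamard space are weakly compact; consequently the family $\{B(F(x),\|p-x\|)\}_{x\in D}$ has nonempty intersection as soon as it has the finite intersection property. It therefore suffices to fix $x_1,\dots,x_n\in D$, write $y_i\as F(x_i)$ and $r_i\as\|p-x_i\|$, and show that $B(y_1,r_1),\dots,B(y_n,r_n)$ share a common point. The data available are $d(y_i,y_j)\le\|x_i-x_j\|$ from nonexpansiveness of $F$, together with the flat fact that $p\in\bigcap_i B(x_i,r_i)$, since $\|p-x_i\|=r_i$ by definition.

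For the finite problem I would minimize the convex and coercive function $h(y)\as\max_i\big(d(y,y_i)-r_i\big)$ over $\hs$; completeness and convexity furnish a minimizer $y^\ast$, and the goal becomes $\lambda\le 0$ where $\lambda\as h(y^\ast)$. Assuming $\lambda>0$, let $I$ be the set of active indices where the maximum is attained. First-order optimality for a maximum of convex functions, interpreted through directional derivatives $-\cos\angle_{y^\ast}(y_i,y)$ of the distance functions, should yield weights $\mu_i\ge 0$ with $\sum_{i\in I}\mu_i=1$ such that no direction is acute to all active points: for every $y\in\hs$,
\[
 \sum_{i\in I}\mu_i\cos\angle_{y^\ast}(y_i,y)\le 0.
\]
Feeding these weights into the $\CAT(0)$ comparison for the configuration $\{y^\ast,y_i\}_{i\in I}$ and comparing against the Euclidean configuration $\{p,x_i\}_{i\in I}$, using $d(y_i,y_j)\le\|x_i-x_j\|$ and $d(y^\ast,y_i)=r_i+\lambda$, I expect to reach a weighted inequality that the point $p$ cannot satisfy, contradicting $\|p-x_i\|=r_i$. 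This forces $\lambda\le 0$, so the finite intersection is nonempty and the extension goes through.

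I expect the last step to be the main obstacle: converting the first-order optimality at $y^\ast$ into a sharp quantitative $\CAT(0)$ inequality and matching it exactly against the flat source is where the curvature hypothesis genuinely enters, and it is the technical heart of the Lang--Schroeder argument. The fact that the source is $\rls^2$, hence flat, is precisely what keeps us in the matched-curvature regime $\kappa=0$, where the comparison closes without the bounded-diameter restrictions needed for positive $\kappa$.
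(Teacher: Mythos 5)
The paper itself does not prove this statement: its ``proof'' is the citation to Lang--Schroeder, so the theorem enters the paper as a black box. Your proposal is an attempt to reprove that external result, and your skeleton is indeed the correct one (it is Lang--Schroeder's own outer structure): Zorn's lemma reduces the problem to a single-point extension; the single-point extension is equivalent to nonemptiness of $\bigcap_{x\in D}B\l(F(x),\|p-x\|\r)$; the finite-intersection-property reduction for bounded closed convex sets in an Hadamard space (itself a nontrivial known lemma, provable via asymptotic centers, that you should cite or prove) brings this down to finitely many balls; and the finite case is attacked by minimizing $h(y)=\max_i\l(d(y,y_i)-r_i\r)$.

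The genuine gap is that the proposal stops exactly where the theorem lives, and the two steps you flag as ``expected'' are the content of the result. First, the weights: in a general Hadamard space, first-order optimality at $y^\ast$ only says that for every direction $u$ in the space of directions at $y^\ast$ there is some active $i$ with $\angle\l(u,\uparrow_{y^\ast}^{y_i}\r)\ge\pi/2$; upgrading this pointwise statement to convex weights $\mu_i\ge0$, $\sum_{i\in I}\mu_i=1$, satisfying $\sum_{i\in I}\mu_i\cos\angle\l(u,\uparrow_{y^\ast}^{y_i}\r)\le0$ for \emph{all} $u$ is a separation statement in the space of directions, which is a $\CAT(1)$ space, and proving it is a substantial spherical lemma in Lang--Schroeder's paper, not a formal KKT argument; moreover, for the later bookkeeping the weights must be rescaled by the factors $d\l(y^\ast,y_i\r)=r_i+\lambda$. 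Second, the closing comparison must actually be exhibited: with weights $w_i$ proportional to $\mu_i/(r_i+\lambda)$ one needs the chain
\begin{equation*}
2\Big(\sum_j w_j\Big)\sum_i w_i(r_i+\lambda)^2 \;\le\; \sum_{i,j}w_iw_j\,d(y_i,y_j)^2 \;\le\; \sum_{i,j}w_iw_j\,\l\|x_i-x_j\r\|^2 \;\le\; 2\Big(\sum_j w_j\Big)\sum_i w_i r_i^2,
\end{equation*}
where the first inequality combines the $\CAT(0)$ law of cosines $d(y_i,y_j)^2\ge d(y^\ast,y_i)^2+d(y^\ast,y_j)^2-2\,d(y^\ast,y_i)\,d(y^\ast,y_j)\cos\angle_{y^\ast}(y_i,y_j)$ with the weighted first-order condition, and the last inequality is the Euclidean expansion $\sum_{i,j}w_iw_j\l\|x_i-x_j\r\|^2=2\l(\sum_j w_j\r)\sum_i w_i r_i^2-2\l\|\sum_i w_i(x_i-p)\r\|^2$ around $p$; this forces $\lambda\le0$. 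Until the spherical separation lemma and this chain are established, what you have is a roadmap of the Lang--Schroeder proof rather than a proof; given that the paper treats the theorem by citation, the honest options are to do the same or to carry out these two steps in full.
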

\begin{proof}
 See~\cite{langschroeder}.
\end{proof}

\begin{corollary}
 Let $(\hs,d)$ be an Hadamard space. Then $\curv\hs\leq0.$
\end{corollary}
\begin{proof}
 Consider a triangle with vertices $a_1,a_2,a_3\in\hs$ and apply Theorem~\ref{thm:kirszbraun} to the set
 $S\as\l\{\ol{a}_1,\ol{a}_2,\ol{a}_3\r\}$ and isometry $f\col
 \ol{a}_i\mapsto a_i,$ for $i=1,2,3.$ We obtain a nonexpansive mapping
 $F\col\rls^2\to\hs$ which maps the circumcenter $\ol{a}$ of $S$ to some $a \in \hs$. By nonexpansiveness, $d(a,a_i)\le \| \ol{a}- \ol{a}_i\|$, 
which is exactly the condition in~\eqref{eq:cat}.
\end{proof}
In case of Hadamard manifolds, one can argue in a more elementary way than via Theorem~\ref{thm:kirszbraun}. We include the proof since it is of independent interest. The following fact, which holds in all Hadamard spaces, will be used.
\begin{lemma} \label{lem:varfor}
Let $(\hs,d)$ be an Hadamard space. Assume $\gamma\col [0,1]\to\hs$ is a
geodesic and $z\in\hs\setminus\gamma.$ Then
\begin{equation*}
 \lim_{t\to0+}\frac{d\l(z,\gamma_0\r)-d\l(z,\gamma_t\r)}{t}=\angle\l(\gamma(1),\gamma(0),z\r).
\end{equation*}
The existence of the limit is part of the statement. The RHS denotes
the angle at $\gamma(0)$ between $\gamma$ and $\l[\gamma(0),z\r].$
\end{lemma}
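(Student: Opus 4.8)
The plan is to split the statement into two independent parts: the \emph{existence} of the one-sided limit, which I will extract from convexity, and the \emph{evaluation} of that limit, which I will extract from the Euclidean law of cosines together with the monotonicity of comparison angles in $\CAT(0)$ spaces. Throughout write $L\as d(\gamma(0),\gamma(1))$ for the speed of $\gamma$, $D\as d(z,\gamma(0))$, and $f(t)\as d(z,\gamma(t))$; note that $D>0$ since $z\notin\gamma$, that $d(\gamma(0),\gamma(t))=tL$ by the parametrisation of a geodesic, and that the geodesic $[\gamma(0),z]$ is well defined because $\hs$ is uniquely geodesic, so that the angle on the right-hand side is meaningful.

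For existence I would use that in a $\CAT(0)$ space the function $t\mapsto f(t)=d(z,\gamma(t))$ is convex, a standard consequence of the convexity of the metric implied by~\eqref{eq:cat}. A convex function on $[0,1]$ admits a right derivative at $0$, and here $f$ is in addition $L$-Lipschitz (since $|f(t)-f(s)|\le d(\gamma(t),\gamma(s))=L|t-s|$), so $f'(0+)$ is finite. Hence $\lim_{t\to0+}\frac{f(0)-f(t)}{t}=-f'(0+)$ exists, which settles the existence claim with no reference to angles at all.

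To evaluate the limit I would introduce, for small $t>0$, the Euclidean comparison angle $\ol{\angle}_t$ at $\gamma(0)$ in the comparison triangle of $(\gamma(t),\gamma(0),z)$, defined by $\cos\ol{\angle}_t=\frac{(tL)^2+D^2-f(t)^2}{2\,tL\,D}$. Solving for $f(t)^2$ and factoring the difference of squares gives the identity $\frac{f(0)-f(t)}{t}=L\cdot\frac{2D\cos\ol{\angle}_t-tL}{D+f(t)}$. Since $f(t)\to D>0$ and $tL\to0$, and since the left-hand side already converges by the previous step, it follows that $\lim_{t\to0+}\cos\ol{\angle}_t$ exists and that the limit in question equals $L\cdot\lim_{t\to0+}\cos\ol{\angle}_t$.

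The main obstacle is therefore the identification $\lim_{t\to0+}\ol{\angle}_t=\angle(\gamma(1),\gamma(0),z)$, namely that the comparison angles converge to the Alexandrov angle between $\gamma$ and $[\gamma(0),z]$. Here I would invoke the fundamental monotonicity property of $\CAT(0)$ spaces: for two geodesics issuing from a common point the comparison angle is nondecreasing in its two parameters (Bridson--Haefliger~\cite{BridsonHaefliger99}, Prop.~II.3.1), which yields both the existence of $\lim_{t\to0+}\ol{\angle}_t$ and the inequality $\lim_{t\to0+}\ol{\angle}_t\ge\angle$. The delicate point is the reverse inequality, because the Alexandrov angle is a priori a \emph{double} limit whereas here the second endpoint is pinned at $z$; this is exactly where the global thin-triangle comparison is needed, by comparing the triangle $(\gamma(t),\gamma(0),z)$ with the triangles $(\gamma(t),\gamma(0),\eta(u))$ for $\eta\as[\gamma(0),z]$ and letting $u\to0$. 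Granting this, the computation produces $\lim_{t\to0+}\frac{d(z,\gamma(0))-d(z,\gamma(t))}{t}=L\cos\angle(\gamma(1),\gamma(0),z)$; in other words the right-hand side of the statement is to be read as the speed times the cosine, $d(\gamma(0),\gamma(1))\cos\angle(\gamma(1),\gamma(0),z)$. A convenient consistency check is the triangle inequality $\frac{f(0)-f(t)}{t}\le L$, which is compatible with $L\cos\angle\le L$, with equality precisely when $z$ lies on the forward prolongation of $\gamma$, i.e. the angle is $0$.
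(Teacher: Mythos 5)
Your proposal cannot be compared with an argument in the paper, because the paper gives none: its ``proof'' of Lemma~\ref{lem:varfor} is the bare citation \cite[p.~185]{BridsonHaefliger99}, i.e.\ the first variation formula for $\CAT(0)$ spaces. What you have written is therefore a reconstruction of the proof of the cited result, and it follows the standard route. Two things you do are right and valuable. First, the split into existence (convexity of $f(t)=d(z,\gamma(t))$ together with the $L$-Lipschitz bound) and evaluation (the law-of-cosines identity $\frac{f(0)-f(t)}{t}=L\cdot\frac{2D\cos\ol{\angle}_t-tL}{D+f(t)}$, whose algebra checks out) is exactly how this is done. Second, your reading of the right-hand side is the correct repair of a typo in the statement: the limit equals $d(\gamma(0),\gamma(1))\cos\angle(\gamma(1),\gamma(0),z)$, not the angle itself (the stated formula is false already in the Euclidean plane), and it is precisely the cosine form that the paper needs later, when it deduces positivity of the limit from $\langle v,b_i\rangle_m>0$.

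There is, however, one genuine gap, and it sits exactly at the crux: the inequality $\limsup_{t\to0+}\ol{\angle}_t\le\angle(\gamma(1),\gamma(0),z)$, which you explicitly defer (``granting this''). This is not a technicality; it is the entire content of the first variation formula, since everything else in your argument is soft, and the easy direction $\lim_{t\to0+}\ol{\angle}_t\ge\angle$ from monotonicity only gives the upper bound $\lim_{t\to0+}\frac{f(0)-f(t)}{t}\le L\cos\angle$. To close it along the lines you sketch: assume $\angle<\pi$ (otherwise there is nothing to prove), fix $\eps>0$ with $\angle+\eps<\pi$, let $\eta\col[0,D]\to\hs$ be the unit-speed geodesic from $\gamma(0)$ to $z$, and use monotonicity once more to choose $u>0$ so small that $\ol{\angle}(\gamma(t),\gamma(0),\eta(u))\le\angle+\eps$ for all small $t>0$. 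The Euclidean law of cosines then gives, with $u$ fixed and $t\to0+$,
\begin{equation*}
 d(\gamma(t),\eta(u))\le\sqrt{(tL)^2+u^2-2tLu\cos(\angle+\eps)}=u-tL\cos(\angle+\eps)+O(t^2),
\end{equation*}
and the triangle inequality $d(\gamma(t),z)\le d(\gamma(t),\eta(u))+(D-u)$ yields $f(t)\le D-tL\cos(\angle+\eps)+O(t^2)$. Substituting this upper bound into $\cos\ol{\angle}_t=\frac{(tL)^2+D^2-f(t)^2}{2tLD}$ gives $\liminf_{t\to0+}\cos\ol{\angle}_t\ge\cos(\angle+\eps)$, and letting $\eps\to0$ completes the identification $\lim_{t\to0+}\ol{\angle}_t=\angle$. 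Note that this closing step uses only the monotonicity of comparison angles (itself the place where $\CAT(0)$ enters) plus the triangle inequality; the additional appeal to the global thin-triangle comparison in your sketch is not actually needed.
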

\begin{proof}
 Cf. \cite[p. 185]{BridsonHaefliger99}.
\end{proof}
\begin{theorem}
 Let $M$ be an Hadamard manifold. Then $\curv M\leq0.$
\end{theorem}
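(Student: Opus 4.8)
The plan is to prove that $\curv M \le 0$ for a Hadamard manifold $M$ directly, using the first variation formula in Lemma~\ref{lem:varfor}, by showing that for any triangle $\l(a_1,a_2,a_3\r)$ in $M$ the circumradius $r\l(a_1,a_2,a_3\r)$ does not exceed the Euclidean circumradius $r\l(\ol{a}_1,\ol{a}_2,\ol{a}_3\r)$. Since $M$ is an Hadamard manifold it is uniquely geodesic and complete, so the function $\rho=\rho_{\l(a_1,a_2,a_3\r)}$ is continuous, and because $\rho(x)\to\infty$ as $d(x,a_1)\to\infty$ it attains its minimum at some circumcenter $m\in M$. The strategy is to analyze the minimizer $m$ via the first variation formula and compare it with the Euclidean picture.

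First I would set $r = r\l(a_1,a_2,a_3\r) = \rho(m)$ and let $I = \l\{ i : d(m,a_i)=r \r\}$ be the set of \emph{active} indices at the minimizer. The key structural claim is that $m$ lies in the convex hull of $\l\{a_i : i\in I\r\}$ in the sense that there is no direction $v$ in the tangent space $T_mM$ that strictly decreases $d(\cdot,a_i)$ for all active $i$ simultaneously; otherwise moving $m$ along the geodesic in direction $v$ would decrease $\rho$, contradicting minimality. Using Lemma~\ref{lem:varfor}, the directional derivative of $d(\cdot,a_i)$ at $m$ in the unit direction $v$ equals $-\cos\angle\l(\exp_m v, m, a_i\r)$, i.e.\ minus the cosine of the angle at $m$ between $v$ and the initial direction of the geodesic $[m,a_i]$. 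Letting $u_i\in T_mM$ denote the unit initial vector of $[m,a_i]$, minimality of $m$ forces that the convex hull of $\l\{u_i : i\in I\r\}$ contains the origin of $T_mM$; equivalently there exist weights $\lambda_i\ge 0$ with $\sum_{i\in I}\lambda_i=1$ and $\sum_{i\in I}\lambda_i u_i = 0$. In particular $|I|\ge 2$, and when $|I|=3$ the three vectors $u_i$ positively span the plane they lie in.

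Next I would pass to the comparison side. In $\rls^2$ the minimum of $\rho_{\l(\ol{a}_1,\ol{a}_2,\ol{a}_3\r)}$ is attained at the unique point $\ol{m}$ characterized by the same type of balancing condition: writing $\ol{u}_i$ for the unit vectors from $\ol{m}$ toward the active $\ol{a}_i$, one has $0$ in the convex hull of the active $\ol{u}_i$, and the circumradius is $r\l(\ol{a}_1,\ol{a}_2,\ol{a}_3\r)=\l\|\ol{m}-\ol{a}_i\r\|$ for active $i$. The comparison is driven by the Toponogov / CAT(0) angle inequality: since $M$ has nonpositive sectional curvature, the angle at $m$ between geodesics $[m,a_i]$ and $[m,a_j]$ is at least the corresponding Euclidean comparison angle, so $\angle(u_i,u_j)\ge \angle(\ol{u}_i^{\,c},\ol{u}_j^{\,c})$, where the latter are the angles in the Euclidean comparison triangle on the vertices $a_i,a_j$ and $m$. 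The plan is to show that a balancing configuration of unit vectors whose pairwise angles are all \emph{at least} those of a Euclidean balancing configuration cannot have strictly larger side lengths; quantitatively, the law of cosines in the comparison triangles together with the two balancing relations yields $r\le r\l(\ol{a}_1,\ol{a}_2,\ol{a}_3\r)$.

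The main obstacle, and the step I would spend the most care on, is the final angle-balancing comparison: translating ``the active directions balance to zero and their pairwise angles dominate the Euclidean ones'' into the scalar inequality $r\le r\l(\ol{a}_1,\ol{a}_2,\ol{a}_3\r)$. The degenerate case $|I|=2$ is easy, since then $a_i,m,a_j$ are collinear (the two unit vectors are antipodal), $m$ is the Euclidean midpoint of the two comparison points, and $r=\tfrac12 d(a_i,a_j)\le r\l(\ol{a}_1,\ol{a}_2,\ol{a}_3\r)$. The genuinely delicate case is $|I|=3$: here I would expand $0=\l|\sum_i\lambda_i u_i\r|^2=\sum_i\lambda_i^2+2\sum_{i<j}\lambda_i\lambda_j\cos\angle(u_i,u_j)$ and compare it with the analogous Euclidean identity at $\ol{m}$, using the angle monotonicity $\cos\angle(u_i,u_j)\le\cos\angle(\ol{u}_i,\ol{u}_j)$ to control how the geodesic radii must shrink relative to the Euclidean ones. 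Care is needed because the balancing weights $\lambda_i$ in $M$ and the weights in $\rls^2$ need not coincide, so the argument must hold uniformly over admissible weight systems; I would handle this by treating $r$ as the optimal value of the associated min-max (smallest enclosing ball) problem and invoking monotonicity of this optimal value under the angle domination, which is exactly the Euclidean geometry underlying the Kirszbraun/Lang--Schroeder comparison used in the previous corollary.
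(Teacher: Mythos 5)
Your overall architecture---existence of a circumcenter $m$, the first-order balancing condition $0\in\mathrm{conv}\{u_i:i\in I\}$ for the active directions via Lemma~\ref{lem:varfor}, reduction to a Euclidean configuration, and monotonicity of the Euclidean minimal enclosing ball radius---is sound, and is in fact close in substance to the paper's proof. But your pivotal comparison step asserts the angle inequality in the wrong direction, and this is a genuine gap. In a Hadamard manifold (more generally in any CAT(0) space), the angle at $m$ between the geodesics $[m,a_i]$ and $[m,a_j]$ is at \emph{most} the Euclidean comparison angle $\tilde\angle(a_i,m,a_j)$; the inequality ``angle $\geq$ comparison angle'' that you invoke is Toponogov's theorem for curvature bounded \emph{below} (your phrase ``Toponogov / CAT(0) angle inequality'' conflates two opposite statements). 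The direction is decisive. With the correct inequality $\angle(u_i,u_j)\le\tilde\angle(a_i,m,a_j)$, the law of cosines applied to active indices (where $d(m,a_i)=d(m,a_j)=r$) gives $r\|u_i-u_j\|\le d(a_i,a_j)=\|\ol{a}_i-\ol{a}_j\|$, i.e.\ the balanced configuration $\{ru_i\}_{i\in I}$ has side lengths dominated by those of the comparison triangle, and since $0\in\mathrm{conv}\{ru_i\}$ with $\|ru_i\|=r$ forces the Euclidean circumradius of $\{ru_i\}$ to equal exactly $r$ (by the identity $\sum_i\lambda_i\|x-ru_i\|^2=\|x\|^2+r^2$, which also dissolves your worry about mismatched weights), Euclidean circumradius monotonicity yields $r\le r\l(\ol{a}_1,\ol{a}_2,\ol{a}_3\r)$, which is \eqref{eq:def}. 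With your stated direction the chain runs backwards: you get $r\|u_i-u_j\|\ge d(a_i,a_j)$, which can only bound $r$ from \emph{below} by a Euclidean circumradius; that is the natural route to proving $\curv M\ge0$ for nonnegatively curved manifolds and gives nothing toward \eqref{eq:def}. Your case $|I|=3$ plan, which expands the balancing identity and uses ``angle domination,'' inherits the same reversal and cannot be completed as written.

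Once the sign is corrected, your argument essentially becomes the paper's: the paper takes $b_i=\exp_m^{-1}(a_i)$, proves by the same first-variation contradiction that $0$ lies in the plane of the $b_i$ (so that $r(a_1,a_2,a_3)=r(b_1,b_2,b_3)$), and then uses $\|b_i-b_j\|\le d(a_i,a_j)$---the $1$-Lipschitz property of $\exp_m^{-1}$ under $\sec M\le0$, which is exactly the correct-direction angle comparison in metric form, since $b_i=d(m,a_i)u_i$---followed by Euclidean circumradius monotonicity. So the only substantive difference between your (corrected) route and the paper's is bookkeeping: you work with the active set and unit vectors, the paper works with the full pullback; the geometric input and the concluding Euclidean step are the same. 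Your case $|I|=2$ is handled correctly as stated.
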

\begin{proof}
 Choose a triangle with vertices $a_1,a_2,a_3\in M$ and observe that the set of minimizers of the function $\rho_{\l(a_1,a_2,a_3\r)}(\cdot)$ coincides with the set of minimizers of the function
\begin{equation*}
 x\mapsto \max_{i=1,2,3} d(x,a_i)^2,\qquad x\in X.
\end{equation*}
Since the latter function is strongly convex on Hadamard manifolds, it has a unique minimizer $m\in M.$ Denote $b_i=\exp_m^{-1}(a_i)\in T_m M$ for every $i=1,2,3.$ We claim that $b_1,b_2,b_3$ lie in a plane containing $0.$ Indeed, if it were not the case, there would exist a vector $v\in T_m M$ such that $\l\langle v,b_i\r\rangle_m >0$ for every $=1,2,3.$ According to Lemma \ref{lem:varfor} we would than have
\begin{equation*}
 \lim_{t\to0+}\frac{d\left(b_i,m \right)-d\left(b_i,\exp_m(tv) \right)}{t}>0,
\end{equation*}
for every $i=1,2,3.$ There is hence $\eps>0$ such that
\begin{equation*}
 d\l(b_i,m \r)>d\l(b_i,\exp_m(tv) \r),
\end{equation*}
for every $t\in(0,\eps)$ and $i=1,2,3.$ This is a contradiction
to $m$ being a minimizer of $\rho_{\l(a_1,a_2,a_3\r)}(\cdot).$ We can therefore
conclude that $b_1,b_2,b_3$ lie in a plane containing $0.$

By an elementary Euclidean geometry argument we obtain that
\begin{equation*}
 r\l(a_1,a_2,a_3\r)= r\l(b_1,b_2,b_3\r).
\end{equation*}
Since $\sec M\leq0,$ we have $\|b_i-b_j\|\leq d(a_i,a_j)$ for every
$i,j=1,2,3.$ Consequently,
\begin{equation*}
 r\l(\ol{a}_1,\ol{a}_2,\ol{a}_3 \r)\geq r\l(b_1,b_2,b_3\r),
\end{equation*}
which finishes the proof.
\end{proof}
The converse implication holds as well.
\begin{theorem}\label{th:CurvimpliesSec}
Let $M$ be a smooth manifold with $\curv M\leq0$. Then $\sec M\leq0$.
\end{theorem}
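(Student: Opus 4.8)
The plan is to argue by contraposition: assuming $\sec M\le 0$ fails, I will produce a one-parameter family of triangles violating \rf{eq:def}. So suppose there is a point $p\in M$ and a $2$-plane $\sigma\subset T_pM$ with $K\as\sec(\sigma)>0$. The guiding intuition is that in directions of positive curvature nearby geodesics converge, so a small triangle sitting in $\sigma$ is ``fatter'' than its Euclidean comparison triangle, which should force its circumradius in $M$ to \emph{exceed} the comparison circumradius.

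Concretely, I would fix unit vectors $v_1,v_2,v_3\in\sigma$ at mutual angles $2\pi/3$ (so that $\sum_i v_i=0$ and $\langle v_i,v_j\rangle=-\tfrac12$ for $i\ne j$) and, for small $\eps>0$, set $a_i\as\exp_p(\eps v_i)$. The key analytic input is the second-order expansion of the squared distance in normal coordinates,
\[
 d\l(\exp_p u,\exp_p w\r)^2=\|u-w\|^2-\tfrac13\l\langle R(u,w)w,u\r\rangle+O\l((\|u\|+\|w\|)^5\r),
\]
where $\langle R(u,w)w,u\rangle=\sec(u,w)\l(\|u\|^2\|w\|^2-\langle u,w\rangle^2\r)$; the coefficient $-\tfrac13$ is pinned down by a direct check on the round sphere. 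Applying this to the pairs $a_i,a_j$ gives $d(a_i,a_j)^2=3\eps^2-\tfrac{K}{4}\eps^4+O(\eps^5)$, so the comparison triangle is equilateral with side $\sqrt3\,\eps\sqrt{1-K\eps^2/12}$, whence
\[
 r\l(\ol a_1,\ol a_2,\ol a_3\r)^2=\eps^2\l(1-\tfrac{K}{12}\eps^2\r)+O(\eps^6).
\]

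Next I would bound $r(a_1,a_2,a_3)$ from below. Since $d(p,a_i)=\eps$ exactly (radial geodesics are minimizing for $\eps$ below the injectivity radius), the circumradius is at most $\eps$, and any near-optimal center $m$ satisfies $d(m,p)\le 2\eps$; so it suffices to consider $m=\exp_p(\eps w)$ with $\|w\|\le 2$. The decisive trick is to replace the awkward maximum by an average: for every such $m$,
\[
 \max_i d(m,a_i)^2\ge\tfrac13\sum_i d(m,a_i)^2=\eps^2\l(1+\|w\|^2\r)-\tfrac{\eps^4}{9}\sum_i\l\langle R(w,v_i)v_i,w\r\rangle+O(\eps^5),
\]
where the term linear in $w$ drops out precisely because $\sum_i v_i=0$. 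Bounding the curvature sum crudely by $C\|w\|^2$ (with $C$ depending on the curvature at $p$) yields $\max_i d(m,a_i)^2\ge\eps^2+\eps^2\|w\|^2(1-C\eps^2)-O(\eps^5)\ge\eps^2-O(\eps^5)$, and hence $r(a_1,a_2,a_3)^2\ge\eps^2-O(\eps^5)$. Comparing with the displayed comparison circumradius gives $r(a_1,a_2,a_3)^2-r(\ol a_1,\ol a_2,\ol a_3)^2\ge\tfrac{K}{12}\eps^4-O(\eps^5)>0$ for all sufficiently small $\eps$, contradicting $\curv M\le 0$.

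The routine parts are the Euclidean computation of the comparison circumradius and the bookkeeping of error terms, which must be uniform in $w$ over the bounded region $\|w\|\le 2$. I expect the main obstacle to be the lower bound on $r(a_1,a_2,a_3)$: a priori, positive curvature also permits moving the center to shrink the maximal distance, and one must rule out that this gain reaches the $O(\eps^2)$ order of the comparison deficit. The averaging inequality is exactly what makes this transparent, since after averaging the only effect at order $\eps^2$ is the manifestly nonnegative $\|w\|^2$ term, while the curvature correction is demoted to order $\eps^4$. I would also confirm that the global infimum defining $r(a_1,a_2,a_3)$ is attained inside the coordinate patch, which follows from the localization $d(m,p)\le 2\eps$ noted above.
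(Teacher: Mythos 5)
Your proof is correct, and although it shares the paper's basic setup --- three unit vectors in the suspect plane at mutual angles $2\pi/3$ and small triangles $a_i=\exp_p(\eps v_i)$ analyzed through the second-order expansion of the distance function --- the core of the argument is genuinely different. The paper's proof pivots on Lemma~\ref{lem:aux}: using the first variation of distance and the fact that every direction makes an angle $>\pi/2$ with some $v_i$, it shows that the base point is \emph{exactly} the circumcenter and the circumradius is \emph{exactly} $t$; it then runs a Euclidean dichotomy on the comparison triangle (circumcenter on a side, or some angle at the circumcenter at least $2\pi/3$) to conclude that at least one comparison side is at least the corresponding Euclidean chord $t\|U-V\|$, and finally invokes Lemma~\ref{t:sectional curvature} only as a sign statement along a sequence $t_i\to 0$. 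You bypass both steps: the averaging inequality $\max_i d(m,a_i)^2\ge\tfrac13\sum_i d(m,a_i)^2$, combined with $\sum_i v_i=0$, replaces the exact identification of the circumcenter by the asymptotic lower bound $r(a_1,a_2,a_3)^2\ge\eps^2-O(\eps^5)$, which is all that is needed; and instead of the dichotomy you compute the comparison circumradius outright, which requires the explicit constant $-\tfrac13$ in the expansion (the paper never needs the value of its constant $C(n,\angle(X,Y))$, only its positivity) and in exchange yields a quantitative violation $r(a_1,a_2,a_3)^2-r(\ol a_1,\ol a_2,\ol a_3)^2\ge\tfrac{K}{12}\eps^4-O(\eps^5)$. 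Your route is more computational but arguably more elementary: no first variation formula, no convexity/uniqueness of the minimizer, no Euclidean case analysis, and the localization of the infimum via the triangle inequality is handled cleanly and explicitly. Two cosmetic points: the comparison triangle is only equilateral up to $O(\eps^5)$, since the three pairwise expansions share leading terms but not error terms, so you should pass through the smoothness of the Euclidean circumradius as a function of the side lengths; and accordingly the error in your formula for $r(\ol a_1,\ol a_2,\ol a_3)^2$ should read $O(\eps^5)$ rather than $O(\eps^6)$. Neither affects the conclusion, since the deficit you exploit is of order $\eps^4$.
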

We shall prove this theorem in the remainder of the present section. Naturally, our arguments are local, and to obtain nonpositive sectional curvature at a point $m\in M,$ we only need assume that \eqref{eq:def} is satisfied for all sufficiently small triangles around this point.

Let us choose a plane $\Pi \subset T_m M$ and pick three unit vectors $X,Y,Z \in\Pi$ with
\begin{equation}\label{eq:anglesaddup}
 \angle XOY=\angle YOZ=\angle ZOX=\frac{2\pi}{3},
\end{equation}
where $O \in T_m M$ is the origin and the angles are measured in the metric of $T_m M$. Furthermore, we set $\gamma_X(t)\as\exp_mtX,\gamma_Y(t)\as\exp_mtY,\gamma_Z(t)\as\exp_m tZ,$ for all small $t>0.$  

\begin{lemma} \label{lem:aux}
For sufficiently small $t,$ the center of the minimal enclosing ball of $\gamma_X(t),\gamma_Y(t),\gamma_Z(t)$ in $M$ is $m,$ and the circumradius is equal to $t,$ that is,
\begin{equation*}
 t = \rho_{\gamma_X(t),\gamma_Y(t),\gamma_Z(t)}(m) = r\l(\gamma_X(t),\gamma_Y(t),\gamma_Z(t)\r).
\end{equation*}
\end{lemma}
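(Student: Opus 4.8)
The plan is to prove Lemma~\ref{lem:aux} by exploiting the symmetry of the configuration together with the fact that small geodesic balls in a Hadamard manifold are almost Euclidean. The three points $\gamma_X(t),\gamma_Y(t),\gamma_Z(t)$ are all at distance exactly $t$ from $m$, so certainly $\rho_{\gamma_X(t),\gamma_Y(t),\gamma_Z(t)}(m)=t$. Hence $r\l(\gamma_X(t),\gamma_Y(t),\gamma_Z(t)\r)\le t$, and the whole content of the lemma is the reverse inequality together with the claim that $m$ is the \emph{unique} minimizer. As in the preceding theorem, I would first replace $\rho$ by its square and note that minimizing $x\mapsto\max_i d(x,\gamma_\bullet(t))$ is the same as minimizing $x\mapsto\max_i d(x,\gamma_\bullet(t))^2$; the latter is strongly convex on the Hadamard manifold $M$, so a unique minimizer $m_t$ exists and it suffices to show $m_t=m$.

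Next I would use the first-variation formula, Lemma~\ref{lem:varfor}, to set up the optimality condition. If $m_t$ were a minimizer, then for every direction $v\in T_{m_t}M$ the directional derivative of $\rho$ cannot be made strictly negative in all of the active constraints simultaneously; concretely, letting $I$ be the set of indices $i$ achieving the maximum $d(m_t,\gamma_\bullet(t))$, there must be no $v$ with $\angle\l(\gamma_i(t),m_t,\exp_{m_t}v\r)$ strictly positive (i.e. the incoming geodesics pointing ``too much to one side'') for all $i\in I$. This is exactly the argument run in the previous theorem: the optimality of the center forces the unit vectors $\exp_{m_t}^{-1}(\gamma_i(t))/t$ to positively span a neighborhood of $0$, i.e. $0$ lies in the convex hull of those directions. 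The plan is to show that at the symmetric candidate $m$ this condition holds because the three initial vectors $X,Y,Z$ are at mutual angles $2\pi/3$ by \eqref{eq:anglesaddup}, so their sum is zero and $0$ is in their convex hull; and that at any competing point the condition must fail for small $t$.

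The cleanest route to uniqueness and to pinning $m_t=m$ is a symmetry-plus-convexity argument. The candidate $m$ and the three points lie in the image under $\exp_m$ of the plane $\Pi$; by the strong convexity the minimizer is unique, so it is enough to locate it. Here I would invoke the almost-Euclidean behaviour of small balls: as $t\to0+$, after rescaling the metric by $1/t$, the configuration $\{\gamma_X(t),\gamma_Y(t),\gamma_Z(t)\}$ converges (in the pointed Gromov--Hausdorff sense, via the exponential chart) to the three unit vectors $X,Y,Z$ in the tangent space $(T_mM,\langle\cdot,\cdot\rangle_m)$, whose unique circumcenter is the origin, with circumradius $1$. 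Since the (squared) objective is strongly convex uniformly for small $t$, the minimizer depends continuously on the data, forcing $m_t\to m$ and $r/t\to1$. To upgrade this limiting statement to the exact equalities $m_t=m$ and $r=t$ for all sufficiently small $t$, I would use the symmetry of the problem: there is an isometry of a small geodesic ball that cyclically permutes $\gamma_X(t),\gamma_Y(t),\gamma_Z(t)$ and fixes $m$ (coming from the rotational symmetry of the three vectors in $\Pi$, transported by $\exp_m$ to the degree needed — or, if $M$ carries no such isometry, I would instead argue that the unique minimizer must be a fixed point of the induced symmetry on the set of minimizers, and the only candidate consistent with the limit is $m$).

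The main obstacle I anticipate is precisely this last point: a generic Hadamard manifold does \emph{not} admit an exact order-$3$ isometry permuting the three points, so one cannot simply invoke symmetry to conclude $m_t=m$ on the nose. The honest fix is the variational/first-order argument: compute, using Lemma~\ref{lem:varfor}, the one-sided directional derivatives of $\rho$ at $m$ in every tangent direction $v\in T_mM$ and verify they are all nonnegative, so $m$ is a critical point of the convex function $\rho^2$ and hence its unique minimizer. The derivative in direction $v$ equals $-\max_{i\in I}\langle v, e_i\rangle$ where $e_i$ is the unit tangent at $m$ toward $\gamma_i(t)$; since $e_X,e_Y,e_Z$ are exactly $X,Y,Z$ (the initial velocities of the defining geodesics) and these satisfy \eqref{eq:anglesaddup}, for every $v$ at least one inner product $\langle v,e_i\rangle$ is nonnegative, so the directional derivative is $\le 0$ cannot hold strictly in all active directions — giving nonnegativity of the max and hence criticality. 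This makes the proof exact and independent of any ambient isometry, and it is the step that needs to be carried out carefully.
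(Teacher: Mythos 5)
Your final variational argument is essentially the paper's own proof: both show that $m$ is a critical point of the convex function $\rho$ (hence its unique minimizer, using the strong convexity of $\rho^2$) by combining the first-variation formula with the angle condition \eqref{eq:anglesaddup}; the paper does this by splitting a direction $V$ into $V^\Pi+V^\perp$ and finding $U\in\{X,Y,Z\}$ with $\angle UOV^\Pi\ge \tfrac{2\pi}{3}>\tfrac{\pi}{2}$, while you equivalently use $X+Y+Z=0$, and you rightly discard the symmetry/Gromov--Hausdorff detour for the same reason the paper never uses it. Just fix the sign slip at the end: the one-sided derivative of $\rho$ at $m$ in direction $v$ is $\max_{i}\bigl(-\langle v,e_i\rangle\bigr)=-\min_{i}\langle v,e_i\rangle$, not $-\max_{i}\langle v,e_i\rangle$, and criticality requires that some $\langle v,e_i\rangle$ be \emph{nonpositive} (not nonnegative), which is exactly what $\langle v,X\rangle+\langle v,Y\rangle+\langle v,Z\rangle=0$ provides.
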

\begin{proof}
For small enough $t$ we can pick a convex neighborhood $W\subset M$ of $m$ containing $\gamma_X(t),\gamma_Y(t),\gamma_Z(t)$ such that the $\rho_{\gamma_X(t),\gamma_Y(t),\gamma_Z(t)}(\cdot)$ has a unique minimizer on $W.$

Fix a unit vector $V \in T_m M$. We first show that there exists $U \in \{X,Y,Z\}$ such that
\begin{equation}\label{eq:onedirectionisfine}
 \lim_{\eps \to 0+} \frac{d (\gamma_U(t),\exp_m(\eps V))^2-d (\gamma_U(t),m)^2}{\eps} \geq 0,
\end{equation}
and $\rho(m) = d \l(\gamma_U(t),m\r).$ If that were true, then for all $z\in W$ sufficiently close to $m$, we would have
\begin{equation*}
 \frac{\rho(\exp_m(\eps V))^2-\rho(m)^2}{\eps} \geq \frac{d (\gamma_U(t),\exp_m(\eps V))^2-d (\gamma_U(t),m)^2}{\eps}.
\end{equation*}
Hence
\begin{equation*}
 \lim_{\eps \to 0+} \frac{\rho(\exp_m(\eps V))^2-\rho(m)^2}{\eps} \geq 0.
\end{equation*}
If this holds for any $V \in T_m M$, together with the convexity of $\rho$ we obtain that $m$ is the unique minimizer.	

To prove the existence of $U$ satisfying \eqref{eq:onedirectionisfine}, decompose $V = \lambda V^\Pi + \mu V^\perp$, where $V^\Pi \in \Pi$ and $V^\perp \in \Pi^\perp$ are unit vectors and $\lambda,\mu\in\rls.$
On the one hand, for any $U \in \Pi$ by the first variation of the distance function gives
\begin{equation*}
 \frac{\di\;\;}{\di\eps} \Big |_{\eps = 0} d(\gamma_{U}(t),\exp_m(\eps V^\perp))^2 = 0.
\end{equation*}
On the other hand, by \eqref{eq:anglesaddup} there has to be some $U \in \{X,Y,Z\}$ such that
\begin{equation*}
 \angle UOV^\Pi  \geq \frac{2\pi}{3} > \frac{\pi}{2}.
\end{equation*}
If $W$ is small enough (independent of $t$), the uniform bound away from $\frac{\pi}{2}$ implies that for any sufficiently small $\eps > 0$
\begin{equation*}
 d (\gamma_{U}(t),\exp_m(\eps V^\Pi)) > d (\gamma_{U}(t),m),
\end{equation*}
which establishes \eqref{eq:onedirectionisfine}. Since we have $\rho(m) = d \l(\gamma_U(t),m\r),$ the Lemma \ref{lem:aux} is proved.
\end{proof}

Now let $\ol{x}(t),\ol{y}(t),\ol{z}(t)$ be a comparison triangle in $\R^2$ for the geodesic triangle $\gamma_X(t),\gamma_Y(t),\gamma_Z(t)$ in $M.$ Let $\ol{m}(t)$ be the minimizer of $\rho_{\ol{x}(t),\ol{y}(t),\ol{z}(t)}(\cdot)$ in $\R^2$. By Lemma~\ref{lem:aux} and by the assumption of nonpositive curvature in the sense of Definition~\ref{def:ournpc}, we have
\begin{equation*}
 t =r\l(\gamma_X(t),\gamma_Y(t),\gamma_Z(t)\r)\leq  r\l(\ol{x}(t),\ol{y}(t),\ol{z}(t)\r).
\end{equation*}
On the other hand, the origin $O$ is the minimizer of $\rho_{(tX,tY,tZ)}(\cdot).$ Now we can conclude from a fully Euclidean argument the following. There are two possibilities for $\ol{m}(t).$ It either lies on one of the sides of the triangle $\ol{x}(t),\ol{y}(t),\ol{z}(t),$ say $\ol{m}(t) \in \l[\ol{x}(t),\ol{y}(t)\r],$ in which case
\begin{equation*}
 \l\|\ol{x}(t)-\ol{y}(t)\r\| \ge 2t > \l\|tX-tY\r\|,
\end{equation*}
or $\ol{m}$ has equal distance to $\ol{x}(t),\; \ol{y}(t),$ and $\ol{z}(t),$ so at least one angle at $\ol{m}$ is greater or equal $\frac{2\pi}3,$ say $\angle \l(\ol{x}(t),\ol{m}(t),\ol{y}(t)\r) \geq \frac{2\pi}{3}.$ Since the angle $\angle X O Y = \frac{2\pi}{3}$, it must be that
\begin{equation*}
 \l\|\ol{x}(t)-\ol{y}(t)\r\| \geq  \l\|tX-tY\r\|.
\end{equation*}
Given $t>0,$ there exist therefore $U,V \in \{X,Y,Z\},$ with $U\neq V,$ such that
\begin{equation*}
 d(\gamma_U(t),\gamma_V(t)) = \l\|\ol{u}(t)-\ol{v}(t) \r\| \geq  \l\|tU-tV\r\|,
\end{equation*}
and in particular there exists a sequence $t_i \to 0$ and $U,V \in \{X,Y,Z\},$ with $U\neq V,$ such that this holds for any $i \in \N$. By the following Lemma \ref{t:sectional curvature}, the sectional curvature of the plane $\Pi\as\spn(X,Y,Z)$ at $m$ is nonpositive. Its proof follows from the second variation formula of the energy of geodesics.

\begin{lemma}\label{t:sectional curvature}
Let $X,Y\in T_m M$ be two independent unit tangent vectors at $m$. 
Then \begin{equation*}
\lim_{t\to 0}\frac{1}{t^2}\left(\frac{d(\exp_m(tX),\exp_m(tY))}{t\l\|X-Y\r\|}-1\right)=-C\l(n,\angle(X,Y)\r) K(X,Y),
\end{equation*}
where $K(X,Y)$ is the sectional curvature of $\spn(X,Y).$ In particular, if the sectional curvature of a plane $\Pi \subset T_m M$ is finite, and there exist unit vectors $X$ and $Y$ spanning $\Pi$ such that for some sequence $t_i \to 0,$
\begin{equation*}
 d\l(\exp_m(t_iX),\exp_m(t_iY)\r) \geq t_i\l\|X-Y\r\|, \quad \text{for each } i \in \nat,
\end{equation*}
then the sectional curvature of $\Pi$ is nonpositive.
\end{lemma}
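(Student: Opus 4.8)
The plan is to establish the two-sided asymptotic expansion
\[
 d\bigl(\exp_m(tX),\exp_m(tY)\bigr)^2 = t^2\|X-Y\|^2 - \tfrac{1}{3}t^4\langle R(X,Y)Y,X\rangle + O(t^5),
\]
where $R$ is normalized so that the Jacobi equation reads $J''+R(J,\dot\gamma)\dot\gamma=0$ and $K(X,Y)=\langle R(X,Y)Y,X\rangle/\sin^2\angle(X,Y)$. Taking a square root and expanding then gives the claimed limit with
\[
 C\bigl(n,\angle(X,Y)\bigr)=\frac{\sin^2\angle(X,Y)}{6\|X-Y\|^2}=\frac{1+\cos\angle(X,Y)}{12}>0,
\]
the positivity (for independent $X,Y$, where $\angle(X,Y)\in(0,\pi)$) being all that the second assertion will require.

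First I would prove the upper bound by exhibiting an explicit competitor. Take the curve $\tau_t(s)\as\exp_m\bigl(t((1-s)X+sY)\bigr)$, $s\in[0,1]$, the image under $\exp_m$ of the Euclidean segment in $T_mM$ from $tX$ to $tY$; it joins $\exp_m(tX)$ to $\exp_m(tY)$, so $d(\exp_m(tX),\exp_m(tY))\le\length(\tau_t)$. Its velocity $\tau_t'(s)$ is the value at the endpoint of the Jacobi field $J$ along $u\mapsto\exp_m\bigl(ut((1-s)X+sY)\bigr)$ with $J(0)=0$ and $J'(0)=t(Y-X)$. The standard expansion $J(u)=uJ'(0)-\tfrac{1}{6}u^3R\bigl(J'(0),\dot\gamma(0)\bigr)\dot\gamma(0)+O(u^4)$ yields
\[
 \|\tau_t'(s)\|^2=t^2\|X-Y\|^2-\tfrac{1}{3}t^4K(X,Y)\,A(s)+O(t^5),
\]
where $A(s)$ is the squared area of the parallelogram spanned by $(1-s)X+sY$ and $Y-X$. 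Since $\bigl((1-s)X+sY\bigr)\wedge(Y-X)=X\wedge Y$, we have $A(s)\equiv\sin^2\angle(X,Y)$, and integrating $\|\tau_t'(s)\|$ over $[0,1]$ gives
\[
 \length(\tau_t)=t\|X-Y\|-\frac{t^3K(X,Y)\sin^2\angle(X,Y)}{6\|X-Y\|}+O(t^5).
\]

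The delicate point is the matching lower bound, namely that the minimizing geodesic $\beta_t$ between the two endpoints is not shorter than $\tau_t$ by more than $O(t^5)$. Here I would argue in normal coordinates centered at $m$, where the metric is $\delta+O(|x|^2)$ and both endpoints lie within distance $O(t)$ of $m$; the geodesic $\beta_t$ is then a $C^1$-small perturbation $\tau_t+\eta$ of the Euclidean segment. The second variation of the \emph{Euclidean} length at its minimizer $\tau_t$ is positive, of order $t^{-1}\|\eta\|^2$, while the metric perturbation alters the length only by a term of order $t^2$; balancing these shows that the optimal deviation satisfies $\|\eta\|=O(t^3)$, whence $\length(\beta_t)=\length(\tau_t)+O(t^5)$. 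This is the only step that requires genuine work, and I expect the careful bookkeeping of the second variation in the perturbed metric to be the main obstacle; it is exactly what promotes the upper bound to the two-sided expansion above.

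Finally, the second assertion follows at once, and in fact needs only the upper bound. The hypothesis $d(\exp_m(t_iX),\exp_m(t_iY))\ge t_i\|X-Y\|$ gives
\[
 \frac{1}{t_i^2}\left(\frac{d(\exp_m(t_iX),\exp_m(t_iY))}{t_i\|X-Y\|}-1\right)\ge 0\qquad\text{for every }i,
\]
and letting $i\to\infty$ in the limit formula forces $-C\bigl(n,\angle(X,Y)\bigr)K(X,Y)\ge 0$. As $C(n,\angle(X,Y))>0$, we conclude $K(X,Y)\le0$.
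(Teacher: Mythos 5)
Your sketch is correct, but it takes a genuinely different route from the paper, whose entire proof of this lemma is the single sentence that it ``follows from the second variation formula of the energy of geodesics.'' Where the paper implicitly asks the reader to differentiate the energy of the family of geodesics joining $\exp_m(tX)$ to $\exp_m(tY)$ twice in $t$, you derive the classical expansion $d(\exp_m(tX),\exp_m(tY))^2=t^2\|X-Y\|^2-\tfrac13 t^4\langle R(X,Y)Y,X\rangle+O(t^5)$ by two one-sided bounds: an upper bound from the explicit competitor $\tau_t$, the $\exp_m$-image of the Euclidean segment, expanded via Jacobi fields --- your observation that $((1-s)X+sY)\wedge(Y-X)=X\wedge Y$, so that the curvature correction is $s$-independent, is exactly the right point --- and a lower bound from a perturbation analysis in normal coordinates. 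Both halves are sound, and the lower bound can even be made less delicate than you anticipate: since the Christoffel symbols vanish at $m$, along the minimizer $\beta_t$ (parametrized on $[0,1]$) one has $|\ddot\beta_t|=|\Gamma(\beta_t)(\dot\beta_t,\dot\beta_t)|=O(t)\cdot O(t^2)=O(t^3)$, so the geodesic equation alone forces $\beta_t$ to deviate from the segment by $O(t^3)$ in $C^1$; combined with the fact that the density of the metric correction has spatial gradient $O(t^2)$, this gives $\length(\beta_t)\ge\length(\tau_t)-O(t^5)$ without any second-variation balancing. Your route buys two things the paper's one-liner does not: an explicit positive constant $C=(1+\cos\angle(X,Y))/12$, which in fact does not depend on $n$, and the remark that the ``in particular'' clause --- the only part of the lemma used in the proof of Theorem~\ref{th:CurvimpliesSec} --- needs nothing but the easy upper bound, i.e.\ the fully rigorous Jacobi-field half of your argument. (A minor quibble: the error in your expansion of $\length(\tau_t)$ is a priori $O(t^4)$ rather than $O(t^5)$ unless one tracks parity in $t$, but any $o(t^3)$ error suffices for the stated limit.)
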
 

The proof of Theorem \ref{th:CurvimpliesSec} is now complete.


\section{Scaled and relaxed nonpositive curvature} \label{sec:scaled}

We now introduce a quantitative version of nonpositive curvature from Definition~\ref{def:ournpc}. It is appropriate in discrete metric spaces.
\begin{definition} \label{def:scalednpc}
A metric space $(X,d)$ has nonpositive curvature at scale $\beta>0$ if
\begin{equation*}
 \inf_{x\in X}\rho_{\l(a_1,a_2,a_3\r)}(x)\leq \min_{x\in\rls^2}  \rho_{\l(\ol{a}_1,\ol{a}_2,\ol{a}_3\r)}(x),
\end{equation*}
for every triangle $a_1,a_2,a_3\in X$ such that $d\l(a_i,a_j\r)\geq\beta$ for every $i,j=1,2,3$ with $i\neq j.$ Denote this curvature condition by $\curv_\beta X\leq0.$
\end{definition}
Again, whenever the infimum is attained, this condition can be formulated in terms of intersections of distance balls.

Another way to relax the nonpositive curvature condition from Definition~\ref{def:ournpc} is to allow a small error.
\begin{definition} \label{def:relaxednpc}
A metric space $(X,d)$ has $\eps$-relaxed nonpositive curvature, where $\eps>0,$ if
\begin{equation*}
 \inf_{x\in X}\rho_{\l(a_1,a_2,a_3\r)}(x)\leq \min_{x\in\rls^2}  \rho_{\l(\ol{a}_1,\ol{a}_2,\ol{a}_3\r)}(x)+\eps,
\end{equation*}
for every triangle $a_1,a_2,a_3\in X.$ Denote this curvature condition by $\eps\textrm{-}\curv X\leq0.$
\end{definition}
We will now observe that this relaxed nonpositive curvature is enjoyed by $\delta$-hyperbolic spaces, where $\delta>0.$ Recall that a geodesic space is $\delta$-hyperbolic if every geodesic triangle is contained in the $\delta$-neighborhood of its arbitrary two sides \cite[p. 399]{BridsonHaefliger99}. Consider thus a geodesic triangle $a_1,a_2,a_3\in X$ in a $\delta$-hyperbolic space $(X,d).$ By the triangle inequality, one can see that
\begin{equation*}
 \inf_{x\in X}\rho_{\l(a_1,a_2,a_3\r)}(x)\leq \frac12\max_{i,j=1,2,3}  d\l(a_i,a_j\r)+2\delta \leq \min_{x\in\rls^2}  \rho_{\l(\ol{a}_1,\ol{a}_2,\ol{a}_3\r)}(x)+2\delta,
\end{equation*}
and therefore $2\delta\textrm{-}\curv X\leq0.$

This in particular applies to Gromov hyperbolic groups. A group is called \emph{hyperbolic} if there exists $\delta>0$ such that its Cayley graph is a $\delta$-hyperbolic space. The above discussion hence implies that a hyperbolic group has $\eps$-relaxed nonpositive curvature for some $\eps>0.$ We should like to mention that Y.~Ollivier has recently esthablished coarse Ricci curvature for hyperbolic groups \cite[Example 15]{ollivier}. For more details on hyperbolic spaces and groups, the reader is referred to \cite{BridsonHaefliger99}.

In conclusion, Definitions \ref{def:scalednpc} and \ref{def:relaxednpc} require ``large'' triangles only to satisfy some nonpositive curvature conditions, whereas ``small'' triangles can be arbitrary. This, in particular, allows for the notion of nonpositive curvature in discrete metric spaces and might be useful in \emph{geometric data analysis.}


\section{From local to global}

In the sense of Alexandrov, any simply-connected geodesic space with local nonpositive 
curvature has global nonpositive curvature, i.e. CAT(0): It is a natural question to 
ask when such kind of globalization theorem holds for our curvature definition.

\begin{definition} \label{def:localnpc}
A metric space $(X,d)$ has local nonpositive curvature if for each $x\in X$ there is a neighborhood
$U$ such that the curvature condition holds for all triangles in $U$. We denote this 
curvature condition by $\curv_{loc} X \le 0$. 
\end{definition}
If every point $x$ admits a convex neighborhood $U_x$ then the condition can be also written as
$$
\curv_{loc} X \le 0 \Longleftrightarrow \forall x\in X: \curv U_x \le 0.
$$  

\begin{theorem}Assume that $(X,d)$ is a geodesic space with $\curv_{loc} X\leq0$ and the circumcenter is attained for every triangle $\{a_i\}_{i=1}^3.$ If $(X,d)$ is globally nonpositive curved in the sense of Busemann, then we have $\curv X\leq 0.$
\end{theorem}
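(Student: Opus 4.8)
The plan is to establish the global inequality $r\l(a_1,a_2,a_3\r)\le r\l(\ol a_1,\ol a_2,\ol a_3\r)$ for an arbitrary triangle by shrinking it radially towards its circumcenter: Busemann convexity controls the shrunk triangle, and for $s$ small the shrunk triangle is tiny, so the local hypothesis applies. Fix a triangle $a_1,a_2,a_3$, let $m$ be a circumcenter (attained by assumption), and write $r_X\as r\l(a_1,a_2,a_3\r)=\max_i d(m,a_i)$ and $\ol r\as r\l(\ol a_1,\ol a_2,\ol a_3\r)$. For $s\in[0,1]$ denote by $a_i(s)$ the point at parameter $s$ on the unique geodesic $[m,a_i]$, and set $\rho_s(x)\as\max_i d\l(x,a_i(s)\r)$. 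Two facts follow from global Busemann convexity. First, convexity of $t\mapsto d\l([m,a_i]_t,[m,a_j]_t\r)$, which vanishes at $t=0$, gives $d\l(a_i(s),a_j(s)\r)\le s\,d(a_i,a_j)$ for all $i,j$. Second, each $d(\cdot,a_i(s))$ is convex along geodesics, so $\rho_s$ is convex and is globally minimized precisely at points admitting no descent direction.

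The key step, which I expect to be the main obstacle, is to show that $m$ is still a circumcenter of the shrunk triangle, i.e. $r\l(a_1(s),a_2(s),a_3(s)\r)=s\,r_X$. The bound $\le$ is clear since $\rho_s(m)=s\,r_X$; for the reverse I would check that $m$ minimizes the convex function $\rho_s$. Because $a_i(s)$ lies on the ray $[m,a_i]$, the initial germ of $[m,a_i(s)]$ agrees with that of $[m,a_i]$; by the first variation of the distance function (compare Lemma~\ref{lem:varfor}) the one–sided derivative of $d(\cdot,a_i(s))$ along any geodesic issuing from $m$ depends only on this germ, hence coincides with the corresponding derivative for $a_i$. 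Moreover the indices attaining the maximum at $m$ are the same for $\rho_s$ as for $\rho\as\max_i d(\cdot,a_i)$, namely $\arg\max_i d(m,a_i)$. Since the directional derivative of a maximum of finitely many convex functions is the maximum of the active directional derivatives, $\rho_s$ and $\rho$ have identical one–sided derivatives at $m$ in every direction. As $m$ minimizes $\rho$ these are all nonnegative, whence $m$ minimizes $\rho_s$ and $r\l(a_1(s),a_2(s),a_3(s)\r)=s\,r_X$. The delicate point is the first variation formula itself: Lemma~\ref{lem:varfor} is stated for Hadamard spaces, so in the Busemann setting one must argue with upper angles and use the non-branching, uniquely geodesic structure to guarantee that this derivative depends only on the germ of $[m,q]$.

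Next I would transport the side-length bound to the comparison side by a Kirszbraun-type monotonicity in the plane. The scaled comparison triangle $\l(s\ol a_1,s\ol a_2,s\ol a_3\r)$ has sides $s\,d(a_i,a_j)$ and minimal enclosing radius $s\,\ol r$, so the disks $B\l(s\ol a_i,s\ol r\r)$ have a common point. Since $d\l(a_i(s),a_j(s)\r)\le s\,d(a_i,a_j)$, the correspondence $s\ol a_i\mapsto\ol{a_i(s)}$ onto the comparison triangle of the shrunk triangle is nonexpansive; and nonexpansive maps of $\rls^2$ preserve nonempty intersections of equal-radius balls, this being the ball-intersection property equivalent to Kirszbraun's theorem in the plane. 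Hence the disks $B\l(\ol{a_i(s)},s\ol r\r)$ also meet, which, in the reformulation \eqref{eq:3}, means $r\l(\ol{a_1(s)},\ol{a_2(s)},\ol{a_3(s)}\r)\le s\,\ol r$.

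Finally I would use locality. For $s$ sufficiently small the points $a_1(s),a_2(s),a_3(s)$ all lie in the neighborhood $U_m$ supplied by $\curv_{loc}X\le0$, so the curvature condition applies to the shrunk triangle and yields $r\l(a_1(s),a_2(s),a_3(s)\r)\le r\l(\ol{a_1(s)},\ol{a_2(s)},\ol{a_3(s)}\r)$. Chaining this with the two previous steps gives $s\,r_X\le s\,\ol r$, and dividing by $s>0$ produces $r_X\le\ol r$. As the triangle was arbitrary, $\curv X\le0$.
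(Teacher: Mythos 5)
Your proposal follows essentially the same route as the paper's proof: shrink the triangle radially toward the attained circumcenter $m$, use Busemann convexity to get $d(a_i(s),a_j(s))\le s\,d(a_i,a_j)$, deduce that the comparison circumradius of the shrunk triangle is at most $s\,\ol r$, apply the local hypothesis to the shrunk triangle, and divide by $s$. In fact you supply details the paper omits: the paper disposes of the key step (that $m$ remains a circumcenter of the shrunk triangle) with the words ``by the contradiction argument,'' and asserts the Euclidean monotonicity $r(\ol a_1^{\,s},\ol a_2^{\,s},\ol a_3^{\,s})\le s\, r(\ol a_1,\ol a_2,\ol a_3)$ without proof; your justification of the latter via the ball-intersection form of Kirszbraun's theorem in the plane is correct.

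The one point to repair is precisely the step you flag as delicate, and the good news is that it dissolves: you never need the first variation formula, nor the claimed \emph{equality} of one-sided derivatives of $d(\cdot,a_i(s))$ and $d(\cdot,a_i)$ at $m$ (which is indeed problematic in a general Busemann space, where Lemma~\ref{lem:varfor} is unavailable and angles need not exist). Your argument only uses one inequality, and that direction is trivial in any metric space: since $a_i(s)$ lies on $[m,a_i]$, for every $x$
\begin{equation*}
 d(x,a_i)-d(m,a_i)\;\le\;\bigl(d(x,a_i(s))+d(a_i(s),a_i)\bigr)-\bigl(d(m,a_i(s))+d(a_i(s),a_i)\bigr)\;=\;d(x,a_i(s))-d(m,a_i(s)),
\end{equation*}
so along any geodesic $\gamma$ issuing from $m$ the right derivative of $u\mapsto d(\gamma(u),a_i(s))$ at $0$ is at least that of $u\mapsto d(\gamma(u),a_i)$ (both exist by convexity of distance functions in a Busemann space). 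The active index sets of $\rho_s$ and $\rho$ at $m$ coincide, because $d(m,a_i(s))=s\,d(m,a_i)$; hence, using that the right derivative of a maximum of finitely many convex functions is the maximum of the active right derivatives, $(\rho_s\circ\gamma)'_+(0)\ge(\rho\circ\gamma)'_+(0)\ge 0$ for every such $\gamma$, and convexity of $\rho_s$ along geodesics gives that $m$ minimizes $\rho_s$, i.e.\ $r(a_1(s),a_2(s),a_3(s))=s\,r_X$. With this substitution your proof is complete and rigorous; the rest of your chain of inequalities is exactly the paper's.
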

\begin{proof}For any triangle $\{a_i\}_{i=1}^3$ in $X,$ it suffices to show \eqref{eq:def}. 
Let $m$ be the circumcenter of the triangle $\{a_i\}_{i=1}^3$ and $ma_i, 1\leq i\leq3,$ the 
minimizing geodesic connecting $m$ and $a_i.$ For any $t>0,$ let $a_i^t$ be the point on the 
geodesic $m a_i$ such that $|ma_i^t|=t|ma_i|.$ By the contradiction argument, one can show 
that $m$ is the circumcenter of the triangle $\{a_i^t\}_{i=1}^3$ for any $t>0,$ and hence 
$r\l(a_1^t,a_2^t,a_3^t\r)=t\cdot r\l(a_1,a_2,a_3\r).$ 

By the local curvature condition, there exists a small neighbourhood $U_m$ of $m$ in which 
the comparison \eqref{eq:def} holds. We know that for sufficiently small $t>0,$ $a_{i}^t\in U_m$ 
for $1\leq i\leq 3.$ Hence for the corresponding comparison triangle $\{\ol{a}_i^t\}$ in $\R^2,$ 
we have
$$ r\l(a_1^t,a_2^t,a_3^t\r)\leq r\l(\ol{a}_1^t,\ol{a}_2^t,\ol{a}_3^t\r).$$ 
The global Busemann condition for the triangle $\{m,a_i,a_{i+1}\},1\leq i\leq 3,$ implies 
that $|a_i^ta_{i+1}^t|\leq t|a_i,a_{i+1}|$ where the indices are understood in the sense of 
module $3.$ Hence  $r\l(\ol{a}_1^t,\ol{a}_2^t,\ol{a}_3^t\r)\leq  t\cdot r\l(\ol{a}_1,\ol{a}_2,\ol{a}_3\r).$ 

Combining all these facts, we have $r\l(a_1,a_2,a_3\r)\leq r\l(\ol{a}_1,\ol{a}_2,\ol{a}_3\r)$ 
which proves the theorem.
\end{proof}


\section{The Kirszbraun theorem and general curvature bounds}
In this section, we shall describe that our constructions and results can be extended to curvature bounds other than $0.$ We shall generalize the result of Section~\ref{sec:otherNPC} to arbitrary curvature bounds both from above and below. For that purpose, we show a direct implication of our curvature comparison by Kirszbraun's theorem on Lipschitz extension by \cite{langschroeder}. Given $\kappa\in\rls,$ let $\CBB(\kappa)$ denote the class of Alexandrov spaces with sectional curvature bounded from below by $\kappa$ and $\CAT(\kappa)$ the class of spaces with sectional curvature bounded from above by $\kappa.$ As a reference on Alexandrov geometry, we recommend \cite{BridsonHaefliger99,bbi}. The symbol $\l(\model,d_\kappa\r)$ stands for the model plane, as usually.

\begin{theorem}[Kirszbraun's theorem] \label{thm:kirszbraunk}
Let $\mathcal{L}\in \CBB(\kappa),$ $\mathcal{U}\in \CAT(\kappa),$
$Q\subset \mathcal{L}$ and $f\col Q\to \mathcal{U}$ be a nonexpansive map.
Assume that there is $z\in \mathcal{U}$ such that $f(Q)\subset
B\l(z,\frac{\pi}{2\sqrt{\kappa}}\r)$ if $\kappa>0.$ Then $f\col Q\to
\mathcal{U}$ can be extended to a nonexpansive map $F\col \mathcal{L}\to
\mathcal{U}.$\end{theorem}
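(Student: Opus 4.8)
The plan is to follow the classical three-step strategy for Kirszbraun-type theorems, adapting each step to the two-sided comparison that pairs a $\CBB(\kappa)$ source with a $\CAT(\kappa)$ target. \textbf{Reduction by Zorn's lemma.} First I would order the set of nonexpansive partial extensions $(Q',g)$, with $Q\subset Q'\subset\mathcal{L}$ and $g\res_{Q}=f$ (and, when $\kappa>0$, $g(Q')\subset B\l(z,\tfrac{\pi}{2\sqrt{\kappa}}\r)$), by restriction. Unions of chains are again such extensions, so a maximal element $(Q^{*},F)$ exists. Since $\mathcal{U}$ is complete, every nonexpansive map extends over the closure of its domain, so $Q^{*}$ is closed. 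It therefore suffices to prove the \emph{one-point extension property}: whenever $p\in\mathcal{L}\setminus Q^{*}$, the map $F$ admits a nonexpansive extension to $Q^{*}\cup\{p\}$. Maximality then forces $Q^{*}=\mathcal{L}$.

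\textbf{From one point to ball intersections.} Extending $F$ to $p$ means choosing $y\as F(p)\in\mathcal{U}$ with $d(y,F(q))\le d(p,q)$ for all $q\in Q^{*}$; equivalently
\[
 \bigcap_{q\in Q^{*}} B\l(F(q),d(p,q)\r)\neq\emptyset,
\]
which is exactly the ball-intersection principle behind Definition~\ref{def:ournpc}. Working inside the convex region $B\l(z,\tfrac{\pi}{2\sqrt{\kappa}}\r)$ (all of $\mathcal{U}$ when $\kappa\le0$), closed balls are bounded, closed and convex, and complete $\CAT(0)$ spaces are reflexive in the sense that a family of bounded closed convex sets with the finite intersection property has nonempty total intersection (see~\cite{bacak}). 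Hence it is enough to verify the intersection for finitely many centers $y_{i}\as F(q_{i})$ and radii $r_{i}\as d(p,q_{i})$, $i=1,\dots,n$.

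\textbf{The finite case.} This is the geometric heart. Assuming the finite intersection is empty, I would minimize a strongly convex, comparison-adapted functional $x\mapsto\max_{i}\varphi_\kappa\l(d(x,y_{i}),r_{i}\r)$, which attains a unique minimizer $x_{0}$ with positive value. A first-variation argument shows that the directions at $x_{0}$ pointing toward the \emph{active} centers must be balanced, i.e.\ no direction decreases all active distances simultaneously, placing the origin in the convex hull of these directions. Now the two curvature bounds pull in opposite ways: at the source point $p$ the $\CBB(\kappa)$ condition makes the comparison angles between the geodesics $[p,q_{i}]$ at least the corresponding model angles in $\model$, while at $x_{0}$ the $\CAT(\kappa)$ condition makes the angles between $[x_{0},y_{i}]$ at most the model angles; together with nonexpansiveness $d(y_{i},y_{j})\le d(q_{i},q_{j})$ and $r_{i}=d(p,q_{i})$, the target configuration is therefore no wider than the source configuration. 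Feeding the balancing relation into the model-plane balancing lemma subject to these angle constraints yields a contradiction, so the intersection is nonempty.

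\textbf{Main obstacle.} The decisive and most delicate step is this finite, model-plane lemma, which compares a balanced family of directions in $\mathcal{U}$ with one in $\mathcal{L}$ through $\model$ and generalizes Kirszbraun's original Euclidean balancing argument; the sign of $\kappa$ enters only through the $\model$ law of cosines, and the degenerate case $\kappa=0$, $\mathcal{L}=\rls^{2}$ recovers Theorem~\ref{thm:kirszbraun}. A secondary but necessary care, when $\kappa>0$, is to keep every extension, including the limit over closures and the newly added value $y$, inside $B\l(z,\tfrac{\pi}{2\sqrt{\kappa}}\r)$, so that convexity and the existence of circumcenters remain available throughout.
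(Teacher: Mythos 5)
The paper does not actually prove this theorem: it is imported from Lang--Schroeder, and the ``proof'' in the paper is just the citation \cite{langschroeder}. Your outer scaffolding does match the strategy of that original proof (and of every Kirszbraun-type argument): Zorn's lemma reducing to a one-point extension, reformulation of the one-point extension as nonemptiness of $\bigcap_{q} B\l(F(q),d(p,q)\r)$, and reduction to finitely many balls via the finite-intersection property of bounded closed convex sets in the target (restricted to a ball of radius below $\frac{\pi}{2\sqrt{\kappa}}$ when $\kappa>0$). Those two steps are correct in substance, modulo the remark that you silently assume $\mathcal{U}$ is complete (needed both for extending over closures and for the finite-intersection property); the statement as reproduced in the paper omits completeness, but the Lang--Schroeder theorem does require it, so this should be flagged rather than assumed.

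The genuine gap is that the finite case --- which you yourself identify as the ``decisive and most delicate step'' --- is named but not proved, and it is not a technicality: everything before it is soft, and the finite ball-intersection statement \emph{is} Kirszbraun's theorem, occupying the bulk of \cite{langschroeder}. As sketched, it has concrete defects. The functional $x\mapsto\max_{i}\varphi_\kappa\l(d(x,y_{i}),r_{i}\r)$ is never defined, and ``strongly convex'' is not automatic in a $\CAT(\kappa)$ space (one has convexity of suitably modified distance functions only under diameter restrictions, and a maximum of rescaled ones need not be strongly convex, only convex with a unique minimizer under extra care). More importantly, the concluding inference --- that the balancing relation at $x_{0}$, the CBB angle comparison at $p$, the CAT angle comparison at $x_{0}$, and nonexpansiveness ``yield a contradiction'' --- is exactly the assertion to be established, not a consequence one can wave at. The correct mechanism is: $p$ itself lies in every source ball $B\l(q_{i},r_{i}\r)$, and one must transfer this nonempty intersection from $\mathcal{L}$ to $\mathcal{U}$ through a quantitative lemma about configurations of points and angles in $\model$, comparing a family of directions that is ``balanced'' in the target with the genuinely realizable configuration in the source; formulating and proving that lemma (with a separate treatment of $\kappa>0$, where the law of cosines and the ball restriction interact) is where the entire difficulty of the theorem is concentrated, and your proposal leaves it as a black box. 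So the proposal is an accurate roadmap of the known proof, but not a proof.
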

\begin{proof}
 Cf. \cite{langschroeder}.
\end{proof}

As a convention, we exclude large triangles in what follows if $\kappa>0.$
\begin{definition}
 Let $(X,d)$ be a metric space. We say that $\curv X\leq\kappa$ if, for each triangle $\l(a_1,a_2,a_3\r)$ in $X,$ we have $r\l(a_1,a_2,a_3\r)\leq r\l(\ol{a}_1,\ol{a}_2,\ol{a}_3\r),$ where $\ol{a}_i$ with $i=1,2,3$ are the vertices of an associated comparison triangle in $\model.$ Similarly, we say that $\curv X\geq\kappa$ if, for each triangle $\l(a_1,a_2,a_3\r)$ in $X,$ we have $r\l(a_1,a_2,a_3\r)\geq r\l(\ol{a}_1,\ol{a}_2,\ol{a}_3\r),$ where $\ol{a}_i$ with $i=1,2,3$ are the vertices of an associated comparison triangle in $\model.$
\end{definition}

\begin{theorem}\label{t:short proof by Kirszbraun}
Let $(X,d)$ be a $\CAT(\kappa)$ space. Then $\curv X \le \kappa.$
\end{theorem}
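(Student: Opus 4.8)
The plan is to mirror the earlier Hadamard-space argument (the Corollary following Theorem~\ref{thm:kirszbraun}) but now using the general Kirszbraun theorem, Theorem~\ref{thm:kirszbraunk}, with the model plane $\model$ playing the role of the domain and the $\CAT(\kappa)$ space $X$ playing the role of the target. The key point is that $\model\in\CBB(\kappa)$, so it is an admissible source space, and $X\in\CAT(\kappa)$ is an admissible target.

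First I would fix an arbitrary triangle $(a_1,a_2,a_3)$ in $X$ and pass to its comparison triangle $(\ol{a}_1,\ol{a}_2,\ol{a}_3)$ in $\model$, so that $d(a_i,a_j)=d_\kappa(\ol{a}_i,\ol{a}_j)$ for all $i,j$. Set $S\as\{\ol{a}_1,\ol{a}_2,\ol{a}_3\}\subset\model$ and define $f\col S\to X$ by $f(\ol{a}_i)=a_i$. By construction $f$ is an isometry on $S$, hence nonexpansive. Next I would apply Theorem~\ref{thm:kirszbraunk} with $\mathcal{L}=\model$ and $\mathcal{U}=X$ to extend $f$ to a nonexpansive map $F\col\model\to X$. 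Let $\ol{m}\in\model$ be a minimizer of $\rho_{(\ol{a}_1,\ol{a}_2,\ol{a}_3)}(\cdot)$, i.e.\ a circumcenter of the comparison triangle realizing the radius $r(\ol{a}_1,\ol{a}_2,\ol{a}_3)$, and put $m\as F(\ol{m})\in X$. Then nonexpansiveness gives, for each $i$,
\begin{equation*}
 d(m,a_i)=d\l(F(\ol{m}),F(\ol{a}_i)\r)\leq d_\kappa(\ol{m},\ol{a}_i)\leq r\l(\ol{a}_1,\ol{a}_2,\ol{a}_3\r).
\end{equation*}
Taking the maximum over $i$ yields $\rho_{(a_1,a_2,a_3)}(m)\leq r(\ol{a}_1,\ol{a}_2,\ol{a}_3)$, and since $r(a_1,a_2,a_3)=\inf_{x\in X}\rho_{(a_1,a_2,a_3)}(x)\leq\rho_{(a_1,a_2,a_3)}(m)$, we obtain exactly the comparison inequality $r(a_1,a_2,a_3)\leq r(\ol{a}_1,\ol{a}_2,\ol{a}_3)$ of the definition $\curv X\leq\kappa$.

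The main obstacle, and essentially the only subtlety beyond this otherwise direct extension argument, is the hypothesis in Theorem~\ref{thm:kirszbraunk} for $\kappa>0$: one must exhibit a point $z\in X$ with $f(S)=\{a_1,a_2,a_3\}\subset B\l(z,\tfrac{\pi}{2\sqrt{\kappa}}\r)$. This is precisely why the convention excluding large triangles when $\kappa>0$ is invoked; I would check that, under the standing restriction to admissible (small) triangles in $\CAT(\kappa)$ spaces, the three vertices lie in a ball of radius less than $\tfrac{\pi}{2\sqrt{\kappa}}$ — for instance taking $z=a_1$ and using that the pairwise distances are bounded by the diameter constraint — so that the Kirszbraun extension applies. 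For $\kappa\leq0$ no such restriction is needed and the argument goes through verbatim. This reduction of the whole theorem to a single invocation of Kirszbraun's extension theorem, exactly as in the $\kappa=0$ Hadamard case, is the conceptual heart of the proof.
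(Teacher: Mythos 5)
Your proposal is correct and follows essentially the same route as the paper's own proof: extend the isometry $\ol{a}_i\mapsto a_i$ from the comparison triangle in $\model$ to a nonexpansive map $F\col\model\to X$ via Theorem~\ref{thm:kirszbraunk}, then push the comparison circumcenter $\ol{m}$ forward and use nonexpansiveness to bound $r(a_1,a_2,a_3)$. Your explicit attention to the ball condition for $\kappa>0$ is in fact slightly more careful than the paper, which dispatches it with the blanket convention of excluding large triangles.
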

\begin{proof}
Let $\l(x_1,x_2,x_3\r)$ be a triangle in $X$ and $\l(\ol{x}_1,\ol{x}_2,\ol{x}_3\r)$ be the
comparison triangle in $\model.$ By the definition of comparison
triangle, the map $f\col \l\{\ol{x}_1,\ol{x}_2,\ol{x}_3\r\}\to X$ defined by $f\l(\ol{x}_i\r)=x_i$
for $i=1,2,3$ is an isometry. By Theorem \ref{thm:kirszbraunk}, the mapping $f$ can be extended to a nonexpansive map $F\col\model\to X.$ Let $\ol{m}\in\model$ be the circumcenter of $\ol{x}_1,\ol{x}_2,\ol{x}_3.$ Then by the nonexpansiveness of the map $F,$ we have
\begin{equation*}
d\l(F\l(\ol{m}\r),x_i\r)\leq d_\kappa\l(\ol{m},\ol{x}_i\r)\leq r\l(\ol{x}_1,\ol{x}_2,\ol{x}_3\r),\qquad i=1,2,3.
\end{equation*}
Hence, we have $r\l(x_1,x_2,x_3\r)\leq r\l(\ol{x}_1,\ol{x}_2,\ol{x}_3\r)$ by the very definition of the circumradius of $\l(x_1,x_2,x_3\r).$
\end{proof}

We note that we can also define a lower curvature bound for any $\kappa$
by requiring that the circumradius of comparison triangle for a triangle
$\Delta$ is less than or equal to the circumcenter of the triangle $\Delta$. 
Similar to the theorem above one can use Kirszbraun's theorem to prove:
\begin{theorem}\label{t:short proof by Kirszbraun lower}
Let $(X,d)$ be a $\CBB(\kappa)$ space. Then $\curv X \ge \kappa.$
\end{theorem}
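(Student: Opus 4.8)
The plan is to mirror the proof of Theorem~\ref{t:short proof by Kirszbraun}, but with the roles of domain and target reversed, since we now have a lower curvature bound rather than an upper one. Let $\l(x_1,x_2,x_3\r)$ be a triangle in $X \in \CBB(\kappa)$, and let $\l(\ol{x}_1,\ol{x}_2,\ol{x}_3\r)$ be a comparison triangle in $\model$. We want to show $r\l(x_1,x_2,x_3\r) \ge r\l(\ol{x}_1,\ol{x}_2,\ol{x}_3\r)$. The key idea is that $\model$ is itself both a $\CBB(\kappa)$ space and a $\CAT(\kappa)$ space, so we can apply Theorem~\ref{thm:kirszbraunk} with $\mathcal{L} = X$ and $\mathcal{U} = \model$.

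First I would define the isometry $g\col \l\{x_1,x_2,x_3\r\} \to \model$ by $g\l(x_i\r) = \ol{x}_i$, which is a nonexpansive (indeed distance-preserving) map from a three-point subset $Q = \l\{x_1,x_2,x_3\r\}$ of $X$ into the model plane. Since $X \in \CBB(\kappa)$ and $\model \in \CAT(\kappa)$, Theorem~\ref{thm:kirszbraunk} applies (checking the ball condition when $\kappa > 0$, which is guaranteed by our convention of excluding large triangles), and yields a nonexpansive extension $G\col X \to \model$ with $G\res_Q = g$. Next, let $m \in X$ be a point nearly realizing the circumradius $r\l(x_1,x_2,x_3\r)$, so that $d\l(m,x_i\r) \le r\l(x_1,x_2,x_3\r) + \eta$ for each $i$ and arbitrarily small $\eta > 0$ (if the infimum is attained we may take $m$ to be the circumcenter directly). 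By nonexpansiveness of $G$,
\begin{equation*}
 d_\kappa\l(G(m),\ol{x}_i\r) \le d\l(m,x_i\r) \le r\l(x_1,x_2,x_3\r)+\eta,\qquad i=1,2,3.
\end{equation*}
Hence $G(m) \in \model$ is a point whose maximal distance to the three comparison vertices is at most $r\l(x_1,x_2,x_3\r)+\eta$, so by the definition of the circumradius in the model plane, $r\l(\ol{x}_1,\ol{x}_2,\ol{x}_3\r) \le r\l(x_1,x_2,x_3\r)+\eta$. Letting $\eta \to 0$ gives the desired inequality.

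The main obstacle I anticipate is verifying the hypotheses of Theorem~\ref{thm:kirszbraunk} in this reversed configuration. The theorem requires the domain to be $\CBB(\kappa)$ and the target to be $\CAT(\kappa)$; here $X$ plays the role of the domain and the model plane plays the role of the target, which is the opposite of the upper-bound case. One must confirm that $\model$ genuinely satisfies the $\CAT(\kappa)$ condition (it does, being a space form of constant curvature $\kappa$) and, when $\kappa > 0$, that the image $g(Q) = \l\{\ol{x}_1,\ol{x}_2,\ol{x}_3\r\}$ lies in a ball of radius $\frac{\pi}{2\sqrt{\kappa}}$ — this is exactly where the convention excluding large triangles is needed, and it should be stated explicitly. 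Beyond this, the only subtlety is the passage from an approximate minimizer to the circumradius via $\eta \to 0$, which is routine once the extension is in hand.
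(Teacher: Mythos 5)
Your proposal is correct and follows essentially the same route as the paper: extend the isometry $x_i \mapsto \ol{x}_i$ to a nonexpansive map $X \to \model$ via Theorem~\ref{thm:kirszbraunk} (with $X$ as the $\CBB(\kappa)$ domain and $\model$ as the $\CAT(\kappa)$ target), then push an (approximate) minimizer of $\rho_{x_1,x_2,x_3}$ forward to bound the comparison circumradius. The paper uses a minimizing sequence $(m_l)$ where you use an $\eta$-approximate minimizer and let $\eta \to 0$ --- the same idea in different notation --- and your explicit verification of the hypotheses of Kirszbraun's theorem is a welcome addition the paper leaves implicit.
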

\begin{proof}
Let $\l(x_1,x_2,x_3\r)$ be a triangle in $X$ and $\l(\ol{x}_1,\ol{x}_2,\ol{x}_3\r)$ be the
comparison triangle in $\model.$ By the definition of comparison
triangle, the map $f\col \{x_1,x_2,x_3\}\to\model$ defined by $f(x_i)=\ol{x}_i$
for $i=1,2,3$ is an isometry. By Kirszbraun's theorem, the map $f$ can be extended to a nonexpansive map $F\col X\to\model.$ Let $\l(m_l\r)\subset X$
be a minimizing sequence of the function $\rho_{x_1,x_2,x_3}(\cdot).$ Then by the nonexpansiveness of the map $F,$ we have
\begin{equation*}
\lim_{l\to\infty} d_\kappa\l(F\l(m_l\r),\ol{x}_i\r)\leq \lim_{l\to\infty} d\l(m_l,x_i\r) \leq r\l(x_1,x_2,x_3\r),\ \ i=1,2,3.\end{equation*}
Hence, we have $r\l(\ol{x}_1,\ol{x}_2,\ol{x}_3\r)\leq r\l(x_1,x_2,x_3\r)$ by the very definition of the circumradius of $\l(x_1,x_2,x_3\r).$
\end{proof} 

We end this section by showing a metric implication, which generalized an observation made in Section~\ref{sec:otherNPC}.
\begin{proposition}
Let $(X,d)$ be a complete metric space. If $\curv X\le\kappa$ for some $\kappa\in\rls,$ then it is a length space.
\end{proposition}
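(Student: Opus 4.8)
The plan is to reduce the statement to the existence of approximate midpoints and then to manufacture such midpoints from the curvature inequality applied to a \emph{degenerate} triangle. Recall from the preliminaries that a complete metric space is a length space if and only if each pair of points admits approximate midpoints. Thus, fixing $x,y\in X$ and $\eps>0$, it suffices to produce $m\in X$ with $\max\{d(x,m),d(y,m)\}\le\frac12 d(x,y)+\eps$. The idea is to feed the triple $\l(x,x,y\r)$ into the curvature condition $\curv X\le\kappa$: two of the three vertices coincide, which is legitimate since a triangle is merely a triple of points.

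The next step is to evaluate the comparison circumradius. For $\l(x,x,y\r)$ the comparison triangle in $\model$ consists of two coincident points $\ol{a}_1=\ol{a}_2$ together with a point $\ol{a}_3$ at distance $d(x,y)$, so $r\l(\ol{a}_1,\ol{a}_2,\ol{a}_3\r)$ is exactly the minimal enclosing radius of a pair of points at distance $d(x,y)$ in $\model$. For any point $z$ in a geodesic space the triangle inequality gives $\max\{d_\kappa(z,\ol{a}_1),d_\kappa(z,\ol{a}_3)\}\ge\frac12 d_\kappa(\ol{a}_1,\ol{a}_3)$, with equality at the midpoint of the connecting geodesic; since $\model$ is geodesic this minimal radius equals $\frac12 d(x,y)$. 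The hypothesis $\curv X\le\kappa$ then yields $r\l(x,x,y\r)\le\frac12 d(x,y)$. The very same triangle-inequality bound applied inside $X$ shows $r\l(x,x,y\r)=\inf_{z\in X}\max\{d(z,x),d(z,y)\}\ge\frac12 d(x,y)$, so in fact $r\l(x,x,y\r)=\frac12 d(x,y)$, and any near-minimizer $m$ of $\rho_{\l(x,x,y\r)}$ is an $\eps$-approximate midpoint of $x,y$.

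When $\kappa\le0$ there is no size restriction on triangles, so the construction above applies verbatim to \emph{every} pair $x,y$, and the conclusion follows at once. The delicate point, which I expect to be the main obstacle, is the case $\kappa>0$: the comparison triangle for $\l(x,x,y\r)$ exists only when $d(x,y)$ does not exceed the diameter $\frac{\pi}{\sqrt{\kappa}}$ of $\model$ (this is precisely the convention excluding large triangles), so the argument directly delivers approximate midpoints only for pairs with $d(x,y)<\frac{\pi}{\sqrt{\kappa}}$. Upgrading ``approximate midpoints for all sufficiently close pairs'' to the global length-space property is then the part requiring care; the natural route is to combine completeness with the approximate-midpoint characterization, chaining midpoints along a near-optimal connecting configuration, while bearing in mind that one needs enough connectivity of $X$ for distant points to be joined at all.
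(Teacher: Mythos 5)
Your proposal is essentially the paper's own proof: you apply the curvature inequality to the degenerate triangle $(x,x,y)$, compute the comparison circumradius as $\frac12 d(x,y)$, and invoke the characterization of complete length spaces via approximate midpoints, exactly as the authors do. The $\kappa>0$ subtlety you flag at the end is genuine but is not resolved in the paper either: the authors' proof tacitly assumes the comparison triangle for $(x,x,y)$ exists in $\model$, which under their own convention of excluding large triangles fails once $d(x,y)>\pi/\sqrt{\kappa}$ (indeed, a two-point space at distance greater than $\pi/\sqrt{\kappa}$ satisfies the hypothesis vacuously yet is not a length space), so your argument is no weaker than theirs and your caveat points at a real limitation of the statement for positive $\kappa$.
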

\begin{proof}
We will show that each pair of points has approximate midpoints. Let $x,y\in X$ and choose a triangle $\l(x_1,x_2,x_3\r)$ in $X$ such that $x_1=x_2=x$ and $x_3=y$. The circumcenter of the comparison triangle is the midpoint $\ol{m}$ of the geodesic $\ol{x}_1,\ol{x}_3$.
By our assumptions we have the inequality
\begin{equation*}
 r\l(x_1, x_2, x_3 \r) \le r\l(\ol{x}_1, \ol{x}_2, \ol{x}_3\r)=\frac12 \l\| \ol{x}_1-\ol{x}_3\r\|=\frac12d(x,y),
\end{equation*}
and since it always holds $\frac12d(x,y)\le r\l(x_1, x_2, x_3\r),$ we obtain that there exists a sequence $\l(m_l\r)\subset X$ such that $d(x,m_l),d(y,m_l)\to \frac12d(x,y).$ That is, the pair of points $x,y$ has approximate midpoints.
\end{proof}

\begin{proposition}
Let $(X,d)$ be a geodesic space. If $\ensuremath{\curv X\ge\kappa,}$
then the space is non-branching.\end{proposition}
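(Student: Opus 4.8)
The plan is to reduce the statement to a single rigidity lemma about the circumradius comparison and then to propagate coincidence of geodesics. Suppose $z\in Z_t(x,y)\cap Z_t(x,y')$ with $d(x,y)=d(x,y')=L$, and parametrize the two geodesics by arclength as $\gamma,\gamma'\col[0,L]\to X$, so that $\gamma(0)=\gamma'(0)=x$, $\gamma(tL)=\gamma'(tL)=z$, $\gamma(L)=y$ and $\gamma'(L)=y'$. We want to show $\gamma(L)=\gamma'(L)$.

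The key observation is the following rigidity lemma: if $w,a,b\in X$ satisfy $d(w,a)=d(w,b)=2\rho$ and there is a point $z$ with $d(z,w)=d(z,a)=d(z,b)=\rho$, then $a=b$. Indeed, the point $z$ witnesses $r\l(w,a,b\r)\le\rho$, while any ball enclosing the comparison triangle must contain the pair $\ol{w},\ol{a}$ (and the pair $\ol{w},\ol{b}$) at mutual distance $2\rho$, whence $r\l(\ol{w},\ol{a},\ol{b}\r)\ge\rho$ by the half-diameter bound. Since $\curv X\ge\kappa$ gives $r\l(w,a,b\r)\ge r\l(\ol{w},\ol{a},\ol{b}\r)$, all three numbers equal $\rho$. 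Equality in the half-diameter bound forces the circumcenter $\ol{c}$ of the comparison triangle to be simultaneously the midpoint of $[\ol{w},\ol{a}]$ and of $[\ol{w},\ol{b}]$; as midpoints in $\model$ are unique (at the small scales we work with when $\kappa>0$), this yields $\ol{a}=\ol{b}$, hence $d(a,b)=0$.

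To start the argument along the geodesics, fix a small $\rho>0$ and set $w=\gamma(tL-\rho)$, $a=\gamma(tL+\rho)$ and $b=\gamma'(tL+\rho)$. Because $w,z,a$ lie on $\gamma$ with $z$ between them, we have $d(z,w)=d(z,a)=\rho$ and $d(w,a)=2\rho$, and $d(z,b)=\rho$ holds along $\gamma'$. The only distance not read off directly is $d(w,b)$, and here the reverse triangle inequality through the \emph{far} vertex $x$ is decisive: $d(w,b)\ge d(x,b)-d(x,w)=(tL+\rho)-(tL-\rho)=2\rho$, while $d(w,b)\le d(w,z)+d(z,b)=2\rho$, so $d(w,b)=2\rho$. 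The lemma now gives $\gamma(tL+\rho)=\gamma'(tL+\rho)$, and letting $\rho$ vary over a small interval shows that $\gamma$ and $\gamma'$ agree on $[tL,tL+\rho_0]$ for some $\rho_0>0$.

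It remains to propagate the coincidence up to $s=L$. Let $s^{\ast}$ be the supremum of those $s\in[tL,L]$ with $\gamma=\gamma'$ on $[tL,s]$; by the previous step $s^{\ast}>tL$, and $\gamma=\gamma'$ on $[tL,s^{\ast}]$ by continuity. If $s^{\ast}<L$, we apply the lemma once more, now with the \emph{common} base point $w=\gamma(s^{\ast}-\eta)=\gamma'(s^{\ast}-\eta)$ (which lies on the already-coincident part for small $\eta$), together with $a=\gamma(s^{\ast}+\eta)$, $b=\gamma'(s^{\ast}+\eta)$ and $z=\gamma(s^{\ast})=\gamma'(s^{\ast})$; since $w$ now lies on both geodesics, $d(w,a)=d(w,b)=2\eta$ hold directly, and we conclude $\gamma(s^{\ast}+\eta)=\gamma'(s^{\ast}+\eta)$, contradicting maximality. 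Hence $s^{\ast}=L$ and $y=y'$. I expect the delicate points to be the equality (rigidity) case of the half-diameter estimate and the uniqueness of midpoints in $\model$, and the fact that the two geodesics are a priori only known to meet at the single point $z$; this last feature is exactly what forces the use of the far vertex $x$ to secure $d(w,b)=2\rho$ in the initial step.
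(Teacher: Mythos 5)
Your proof is correct, and it reaches the conclusion by a genuinely different route than the paper, even though the two share the same comparison kernel. Your rigidity lemma is exactly the paper's key observation in miniature: a triangle with two sides of length $2\rho$ admitting a point at distance $\rho$ from all three vertices must be degenerate, since otherwise the comparison triangle is acute isosceles and its circumradius strictly exceeds $\rho$, contradicting $r\l(w,a,b\r)\ge r\l(\ol{w},\ol{a},\ol{b}\r)$. The paper deploys this fact once, globally: it asserts that a branching space contains three distinct points $x,y,y'$ with a \emph{common midpoint} $z$ (i.e. branching at $t=\tfrac12$), so that $r(x,y,y')=d(x,y)/2$ is witnessed by $z$ while the comparison triangle is acute isosceles, an immediate contradiction. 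You instead apply the comparison at infinitesimal scale around the branch point, using the far vertex $x$ and the reverse triangle inequality to secure the one distance $d(w,b)=2\rho$ that is not read off along a single geodesic, and then propagate the coincidence of the two geodesics to the endpoints by a sup/continuity argument. Your version is longer, but it buys something real: it works for branching at an \emph{arbitrary} parameter $t\in(0,1)$, whereas the paper's reduction to $t=\tfrac12$ is exactly the step it dismisses with ``it is not difficult to see'' (and which, for $t<\tfrac12$, does require an argument, e.g. doubling along the geodesics until a genuine midpoint configuration appears). Two small points you should state precisely: first, geodesics from $x$ through $z$ to $y$ (resp. $y'$) exist because the concatenation of geodesics $[x,z]$ and $[z,y]$ has length $d(x,y)$ and is therefore a geodesic after reparametrization; second, the fact you invoke in $\model$ is not uniqueness of midpoints but uniqueness of geodesic extension -- given $\ol{w}$ and the midpoint $\ol{c}$, the endpoint $\ol{a}$ is uniquely determined (equivalently, geodesics in $\model$ do not branch at the scales allowed by the paper's convention for $\kappa>0$). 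Both facts are elementary and true, so neither affects the validity of your argument.
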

\begin{proof}
Assume the space is branching, then there are three distinct point
$x,y,y'\in X$ such that $z\in Z_{\frac{1}{2}}(x,y)\cap Z_{\frac{1}{2}}(x,y')$
and it is not difficult to see that $d(y,y')\le d(x,y)=d(x,y')$ and
thus $r(x,y,y')=d(x,y)/2$. Note, however, that the corresponding
comparison triangle $(\bar{x},\bar{y},\bar{y}')$ is a regular isosceles
triangle and hence $r(\bar{x},\bar{y},\bar{y}')>d(\bar{x},\bar{y})/2=r(x,y,y')$.
But this violates the curvature conditions and hence the space cannot
contain branching geodesics. 
\end{proof}


\section{$L^p$-spaces and the curvature condition}
We have the following surprising result for the new curvature condition.
\begin{theorem}[Curvature of $L^p$-spaces]
We have $\curv L^p\leq 0$ if and only if $p=2$ or $p=\infty.$
\end{theorem}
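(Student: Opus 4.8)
The plan is to split into the three regimes $p=2$, $p=\infty$, and $p\notin\{2,\infty\}$, reusing the machinery already in the paper for the first two and producing explicit violating triangles for the rest. For $p=2$ there is nothing new: $L^2$ is a Hilbert space, hence a complete $\CAT(0)$, i.e. Hadamard, space, and the Corollary to Theorem~\ref{thm:kirszbraun} gives $\curv L^2\le 0$ at once.

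For $p=\infty$ I would isolate the elementary fact that in $L^\infty$ the circumradius of any triple equals half its diameter, $r(a_1,a_2,a_3)=\frac12\max_{i,j}\|a_i-a_j\|_\infty$. The bound $\ge$ is the triangle inequality; for $\le$ one takes $g=\frac12\l(\max_i f_i+\min_i f_i\r)$ pointwise, so that $|g-f_i|\le\frac12\l(\max_i f_i-\min_i f_i\r)\le\frac12\max_{i,j}\|f_i-f_j\|_\infty$ everywhere. Since the circumradius of any planar triangle is at least half of its longest side, $r(\ol a_1,\ol a_2,\ol a_3)\ge\frac12\max_{i,j}\|\ol a_i-\ol a_j\|=\frac12\max_{i,j}\|a_i-a_j\|_\infty=r(a_1,a_2,a_3)$, which is exactly \eqref{eq:def}.

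For $1<p<\infty$ with $p\ne2$ the goal is a single triangle violating \eqref{eq:def}. Working in the isometric copy $\ell^p_2\subset L^p$ and using that $L^p$ is strictly convex, I set $a_3=0$, $a_1=u$, $a_2=v$ and impose $\|u\|_p^2+\|v\|_p^2\le\|u-v\|_p^2$ together with $\|u+v\|_p>\|u-v\|_p$. The first condition makes the comparison triangle non-acute with longest side $[\ol a_1,\ol a_2]$, so $r(\ol a_1,\ol a_2,\ol a_3)=\frac12\|u-v\|_p$; the second says that the midpoint $\frac{u+v}{2}$ — which by strict convexity is the \emph{only} point within $\frac12\|u-v\|_p$ of both $a_1$ and $a_2$ — is farther than $\frac12\|u-v\|_p$ from $a_3$. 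Hence no point meets all three balls of that radius and $r(a_1,a_2,a_3)>\frac12\|u-v\|_p=r(\ol a_1,\ol a_2,\ol a_3)$. For $p>2$ the family $u=(1,t)$, $v=(1,-t)$ with $(2^{p/2}-1)^{-1/p}\le t<1$ does the job; for $1<p<2$ a small perturbation $u=(1,\eps)$, $v=(\eps,1)$ of $e_1,e_2$ works, the parallelogram defect carrying the opposite sign. At $p=2$ the two conditions are incompatible by the parallelogram law, consistent with $L^2$ being admissible.

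The step I expect to be the real obstacle is $p=1$. Here $L^1$ is not strictly convex, the set of points within $\frac12\|a_1-a_2\|_1$ of both $a_1$ and $a_2$ is a whole coordinate box rather than a single midpoint, and the construction above collapses, since one can slide along that box toward $a_3$. In fact I expect $L^1$ to behave like $L^\infty$: using the pointwise identity $|\mathrm{med}(\alpha,\beta,\gamma)-\alpha|=\frac12\l(|\alpha-\beta|+|\alpha-\gamma|-|\beta-\gamma|\r)$ and then sliding the pointwise median onto the equator of the diameter pair, one shows that the circumradius of every triple in $L^1$ again equals half its diameter, forcing $r(a_1,a_2,a_3)=\frac12\max_{i,j}\|a_i-a_j\|_1\le r(\ol a_1,\ol a_2,\ol a_3)$ and hence $\curv L^1\le0$. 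So $p=1$ appears to \emph{satisfy} the bound, and the crux I would need to settle is to reconcile this with the stated dichotomy — either the characterization should read $p\in\{1,2,\infty\}$, or a genuinely subtler $L^1$ triangle must be exhibited.
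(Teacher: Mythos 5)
Your handling of the cases the paper actually proves is essentially the paper's own proof. For $p=2$ the paper likewise dismisses the claim as trivial; for $p=\infty$ the paper builds the same coordinate-wise mid-range circumcenter (it writes $c^l=\frac{x_k^l-x_i^l}{2}$, an evident typo for the midpoint $\frac{x_k^l+x_i^l}{2}$ of the range, i.e.\ your $g=\frac12(\max_i f_i+\min_i f_i)$) and then invokes the same Euclidean fact that a planar circumradius is at least half the longest side. For $p\in(1,2)\cup(2,\infty)$ the paper also reduces to $(\R^2,\|\cdot\|_p)$ and exhibits triangles of exactly your type: isosceles triangles with side lengths $\sqrt2,\sqrt2,2$, whose Euclidean comparison triangle is right-angled with circumradius $1$, while in $(\R^2,\|\cdot\|_p)$ no point lies within distance $1$ of all three vertices. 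Your justification of the last step via strict convexity --- the only point within $\frac12\|u-v\|_p$ of both endpoints of the longest side is the affine midpoint, which is too far from the third vertex --- is in fact more explicit than the paper's ``one easily sees that this triangle is not obtuse, and thus $r(A',B,C)>1$,'' and your parameter ranges are correct: $(2^{p/2}-1)^{-1/p}<1$ holds precisely when $p>2$, and for $1<p<2$ the perturbation $\eps>0$ is genuinely needed since at $\eps=0$ one only gets the equality $\|u+v\|_p=\|u-v\|_p$.

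Your worry about $p=1$ is not a gap in your argument but a genuine defect of the theorem as stated, and you have resolved it correctly. The paper's proof explicitly restricts the counterexample construction to $p\in(1,2)\cup(2,\infty)$ and never addresses $p=1$, even though the ``only if'' direction of the statement covers it. Your median-plus-sliding sketch does complete to a proof that $\curv L^1\le0$: with $d_{12}\ge d_{13}\ge d_{23}$, the pointwise median $m$ of $f_1,f_2,f_3$ lies pointwise between each pair, so $\|m-f_i\|_1$ equals the Gromov product $\alpha_i=\frac12(d_{ij}+d_{ik}-d_{jk})$; moving $m$ affinely toward $f_1$ by $L^1$-distance $t=\frac12(d_{13}-d_{23})$ keeps the point pointwise between $f_1$ and $f_2$ and between $f_1$ and $f_3$, so its distances to $f_1$ and $f_2$ are exactly $\frac12 d_{12}$ and its distance to $f_3$ is $d_{13}-\frac12 d_{12}\le\frac12 d_{12}$. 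Hence the circumradius of any triple in $L^1$ equals half its diameter, and the comparison inequality follows exactly as in the $L^\infty$ case. So the correct characterization is $p\in\{1,2,\infty\}$, and the theorem must either be restated accordingly or read with the tacit restriction $p>1$; no ``subtler $L^1$ triangle'' can exist.
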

\begin{proof}
Since $\curv L^p\leq 0$ trivially, we show that $\curv L^\infty\leq 0.$ Let $\l(x_1,x_2,x_3\r)$ be a triangle in $L^\infty$ 
and without loss of generality $\l[x_1,x_2\r]$ be the longest side. Furthermore let $\l(\ol{x}_1,\ol{x}_2,\ol{x}_3\r)$ 
be the comparison triangle in $\R^2$ for the triangle $\l(x_1,x_2,x_3\r).$ We claim that 
\begin{equation*}
r\l(x_1,x_2,x_3\r)=\frac12 \l\|x_1-x_2\r\|_\infty.
\end{equation*}
Because in $\R^2$ one always has $\frac12\l\|\ol{x}_1-\ol{x}_2\r\|\le r\l(\ol{x}_1,\ol{x}_2,\ol{x}_3\r)$, we have 
$r \l(x_1,x_2,x_3 \r)\le r\l(\ol{x}_1,\ol{x}_2,\ol{x}_3\r)$ and thus $\curv L^\infty \leq0.$

In order to prove the claim, we will construct a circumcenter explicitly: Let 
$c\in L^\infty$ be the point that is for each coordinate a circumcenter, that is,  
for coordinate $l\in \nat$ if $x_i^l\le x_j^l \le x_k^l$ then 
$c^l=\frac{x_k^l-x_i^l}{2}$. One can easily see that $\l\|c-x_i\r\|_\infty=\frac12\l\|x_1-x_2\r\|_\infty$ and that 
there cannot be any point closer to all three points at once.

We now turn to the statement about $L^p$-spaces with $p\in(1,2)\cup(2,\infty).$ In order to show that these $L^p$-spaces do not satisfy the curvature 
condition we will construct explicit counterexamples. For this note that it suffices to show that $\l(\R^2,\|\cdot\|_p\r)$ admits a counterexample, since each $L^p$-space with $p\in(1,\infty)$ contains $\l(\R^2,\|\cdot\|_p\r)$.

Assume first that $2<p<\infty$ and let $(A,B,C)$ be the triangle with coordinates 
$A=(0,1),B=(-1,0)$ and $C=(1,0)$, see Figure \ref{figp1}. The length of the sides with respect to the $L^p$-norm
are $a=2$ and $b=c=\sqrt[p]{2}<\sqrt{2}$.
Now find the triangle $A'=(0,y)$ such that $y>1$ and $b'=c'=\sqrt{2}$. One easily sees
that this triangle is not obtuse and thus $r\l(A',B,C\r)>1$, but the corresponding comparison 
triangle in $\R^2$ is rectangular and its circumradius is $1$. All $L^p$-spaces with 
$2<p<\infty$ do not satisfy the curvature condition.
\begin{figure}
\centering
\begin{minipage}{.5\textwidth}
  \centering
  \includegraphics[width=.8\linewidth]{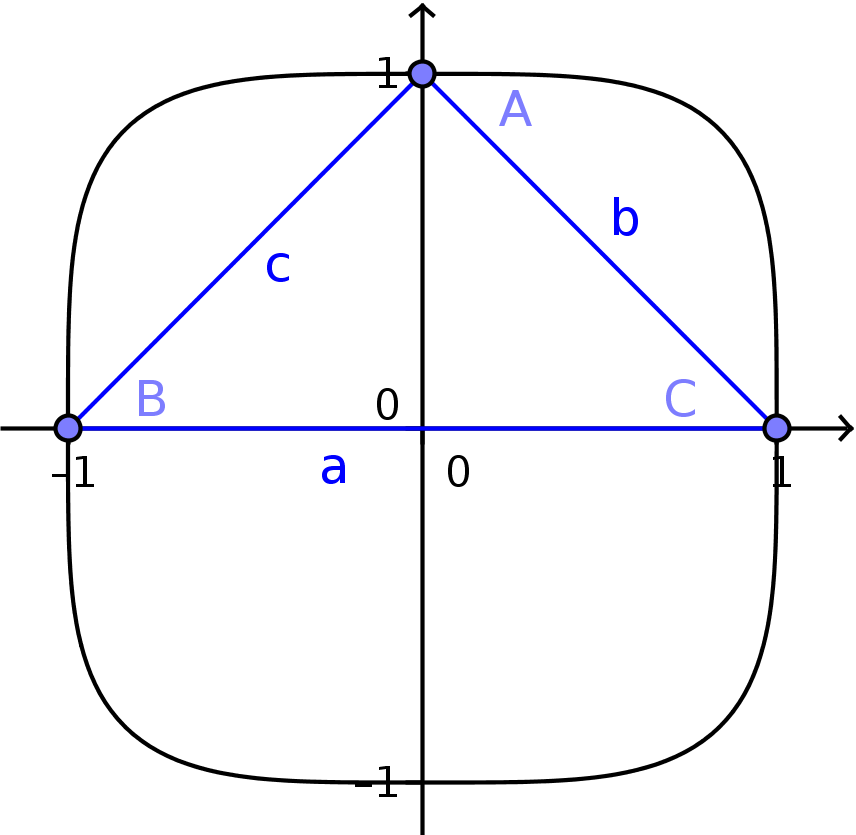}
  \caption{}{$2<p<\infty$}
  \label{figp1}
\end{minipage}%
\begin{minipage}{.5\textwidth}
  \centering
  \includegraphics[width=.8\linewidth]{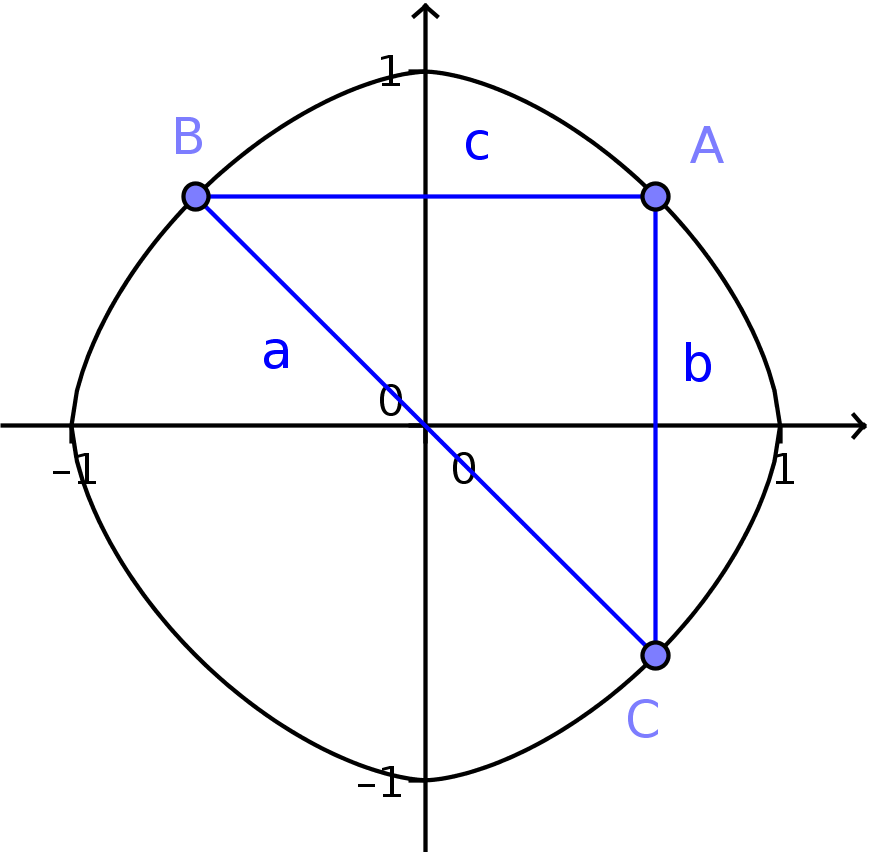}
  \caption{}{$1<p<2$}
  \label{figq2}
\end{minipage}
\end{figure}

Now assume $1<p<2$. We assume again that $\l(A,B,C\r)$ is a triangle on the 
$L^p$-unit sphere with coordinate $A=(r,r), B=(-r,r)$ 
and $C=(r,-r)$, see Figure \ref{figq2}. One easily see that $r=\frac{1}{\sqrt[p]{2}}$ and that 
\begin{equation*}
b=c=2r=\frac{2}{\sqrt[p]{2}} < \frac{2}{\sqrt{2}} = \sqrt{2}.
\end{equation*}
Thus we can again find a point $A'=(r',r')$ with $r'>r$ and $b'=c'=\sqrt{2}$. This 
triangle is not obtuse and $r\l(A',B,C\r)>1$. Hence $L^p$ with $1<p<2$ does not satisfy 
the curvature condition.
\end{proof}

\begin{remark}
Actually it is not difficult to show that $L^p$-spaces with $p\in(1,2)\cup(2,\infty)$ do not even satisfy a lower curvature bound. 
\end{remark}
\begin{proof}
In order to show that no $L^p$-space except for $L^2$ can satisfy a lower curvature bound take 
the two triangle above but change the condition $2<p<\infty$ and $1<p<2$, see Figure \ref{figp2} 
and \ref{figq1}. Now point $A'$ will lie inside the unit sphere and the corresponding triangles 
$A'BC$ are in the interior of obtuse triangle with respect to the $L^p$-norm. Since the comparison 
triangle in $\R^2$ is rectangular we can create a acute isosceles triangle $\tilde{\Delta}$ 
with base side length $1$. Since the triangle $A'BC$ is in the interior of obtuse triangles the 
triangle $\Delta$ corresponding to the comparison triangle $\tilde{\Delta}$ will be obtuse as 
well and its circumradius is $1$. Since $\tilde{\Delta}$ is regular, we see that its circumradius 
is greater than $1$, hence $\Delta$ is a counterexample to a lower curvature bound.
\end{proof}
\begin{figure}
\centering
\begin{minipage}{.5\textwidth}
  \centering
  \includegraphics[width=.8\linewidth]{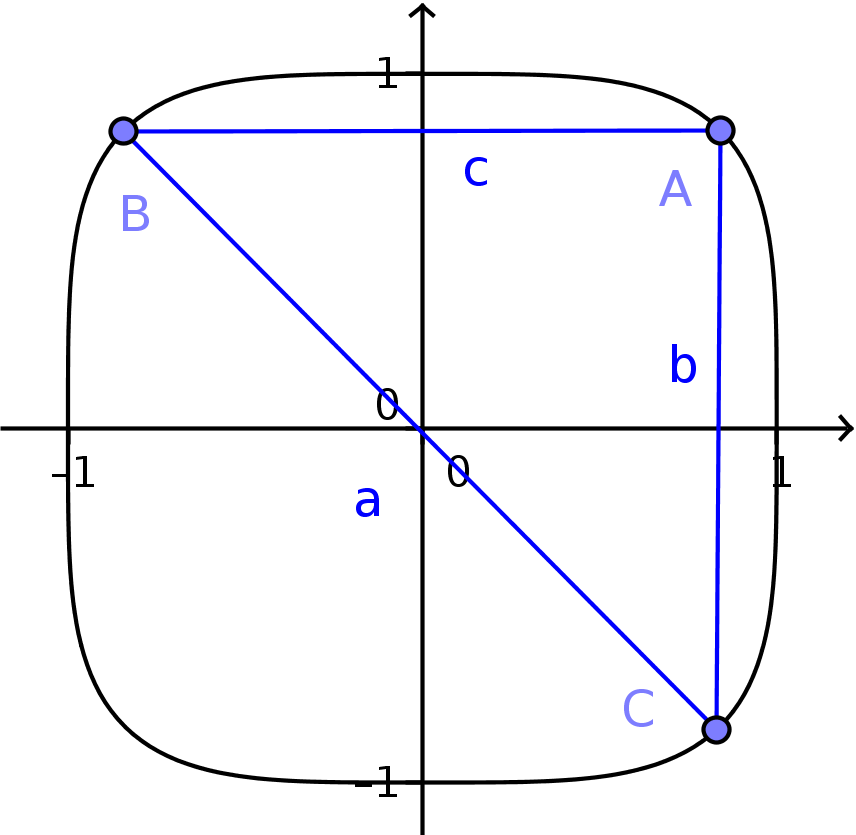}
  \caption{}{$2<p<\infty$}
  \label{figp2}
\end{minipage}%
\begin{minipage}{.5\textwidth}
  \centering
  \includegraphics[width=.8\linewidth]{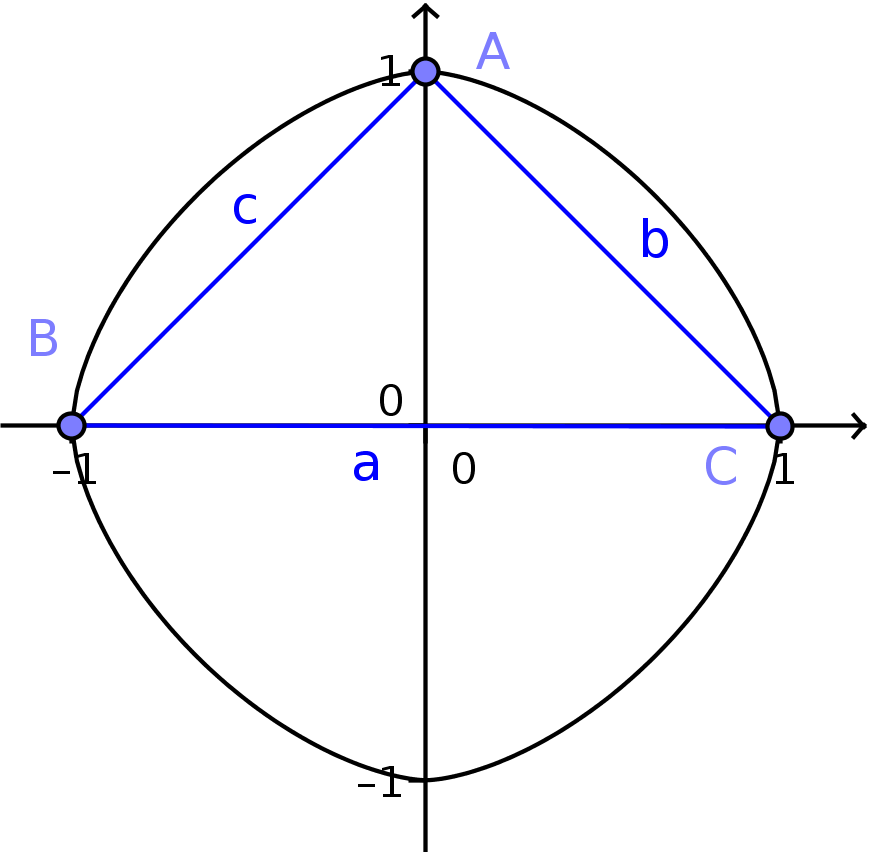}
  \caption{}{$1<p<2$}
  \label{figq1}
\end{minipage}
\end{figure}



\bibliography{npc}
\bibliographystyle{siam}

\end{document}